\theoremstyle{remark}
\newtheorem{para}{\bf}[subsection]
\newtheorem{rem}[para]{\bf Remark}
\theoremstyle{definition}
\newtheorem{dfn}[para]{Definition}
\theoremstyle{plain}
\newtheorem{thm}[para]{Theorem}
\newtheorem{lem}[para]{Lemma}
\newtheorem{cor}[para]{Corollary}
\newtheorem{prop}[para]{Proposition}
\newenvironment{numequation}{\addtocounter{para}{1}
\begin{equation}}{\end{equation}}
\newcommand{\bbF}{{\mathbb F}}
\newcommand{\bbP}{{\mathbb P}}
\newcommand{\fro}{{\mathfrak o}}
\begin{document}

%%
%% The title of the paper goes here.  Edit to your title.
%%

\title{Domains of injectivity for the Gross Hopkins Period Map}

%%
%% Now edit the following to give your name and address:
%%

%\author{Chi Yu Lo}
%\address{Department of Mathematics, Indiana University, Rawles Hall, 831 E 3rd St, Bloomington, IN 47408 }
%\email{loch@indiana.edu}
%\urladdr{http://mypage.iu.edu/~loch} % Delete if not wanted.

%%
%% If there is another author uncomment and edit the following.
%%

%%%
%%% The following is for the abstract.  The abstract is optional and
%%% if not used just delete, or comment out, the following.
%%%

\begin{abstract}
We determine the domain of injectivity of the Gross-Hopkins Period map around each points in the deformation space for a fixed formal module $\bar{F}$ of height 2 that defined over a finite field. And then we will use this to conclude some local analyticity result of the group action for the automorphism group of $\bar{F}$ on the deformation space.  
\end{abstract}
%%
%%  LaTeX will not make the title for the paper unless told to do so.
%%  This is done by uncommenting the following.
%%

\maketitle

%%
%% LaTeX can automatically make a table of contents.  This is done by
%% uncommenting the following:
%%

\tableofcontents

%%
%%  To enter text is easy.  Just type it.  A blank line starts a new
%%  paragraph.
%%

%%
%% A new section is started as follows:
%%

%%%%%%%%%%%%%%%%%%%%%%%%%%%%%%%%%%%%%%%%%%%%%%%%%%%%%%%%%%%%%%%%%%%%%%
\section{Introduction}
%%%%%%%%%%%%%%%%%%%%%%%%%%%%%%%%%%%%%%%%%%%%%%%%%%%%%%%%%%%%%%%%%%%%%%

In this paper, we will show if we fixed $\gamma>0$ and for any $u_0$ in the deformation $X$ s.t. $|u_0|=\gamma$, then the Gross-Hopkins Period map $\Phi$ is injective in $\{u:|u-u_0|<(|\pi| \gamma^{-2})^{\frac{1}{q-1}}\}$. More general, the distance between points in the same fiber $\Phi^{-1}(\Phi(u_0))$ is determined and only depends on the norm $\gamma$. The proof bases on the relation between quasi-isogenies of a fixed lifting of a fixed formal module $\bar{F}$. Then we can describe the fiber of the period map and in particular the conclusion on domain of injectivity. In section 3, we will discuss the image of the domain of injectivity. We will end the paper with the discussion on the local analyticity using the result of \cite{Ch} along with the domain of injectivity.\\

{\it Notation}: $p$ is a prime and $q$ is a power of $p$. $K$ is an finite extension of $\mathbb{Q}_p$ with residue field $\mathbb{F}_q$ and uniformizer $\pi$. Denote $A$ to be its ring of integers. The completion of the algebraic closure of $\mathbb{Q}_p$ is denoted by $\mathbb{C}_p$ with ring of integers $\fro_{\mathbb{C}_p}$ and the valuation ideal $\mathfrak{m}_{\mathbb{C}_p}$. \\

A formal $A-$module law $(F,[\cdot]_F)$ over a $A-$algebra $R$ is:
\begin{enumerate}
	\item $F(X,Y)$ is a formal power series in two variables $X,Y$ over $R$ that satisfies: $F(X,Y)=F(Y,X)$(commutative), $F(X,F(Y,Z))=F(F(X,Y),Z)$(associative) and $F(X,Y)=X+Y+$higher order terms.  
	\item $[\cdot]_F:A\rightarrow R[[X]]$ such that for each $a\in A$, the power series $[a]_F$ is a homomorphism, that is, $[a]_F(F(X,Y))=F([a]_F(X),[a]_F(Y))$ and $[a]_F(X)=a\cdot 1 X+$higher order terms where $a\cdot 1$ is the image of $a$ in $R$.\\ 
\end{enumerate}

$(\bar{F},[\cdot]_{\bar{F}})$ is the height 2 formal $A$ module law over $\bbF_q$ with $[\zeta]_{\bar{F}}(x)=\zeta x$ for $\zeta\in \mu_{q-1}$. Here $\mu_{n}$ denotes the roots of unity of order divides $n$. $K_2$ is the degree 2 unramified extension of $K$ with its ring of integers $A_2$. $D$ is the division algebra 
with center $K$ with invariant $1/2$ and $\fro_D$ is it ring of integers. $G$ is the automorphism group of $\bar{F}$ which is isomorphic to $\fro_D^*$ and has an induced action on the deformation space $\mathfrak{X}=spf(A[[u]])$ of $\bar{F}$. There is a universal deformation $F_u$ over $A[[u]]$ such that any deformation $F$ of $\bar{F}$ over a complete noetherian local $A$-algebra $R$, is $*$-isomorphic to the push forward of $F_u$ under an unique map $A[[u]]\rightarrow R$. In \cite{GH}, there is a $G-$equivariant morphism $\Phi=[\phi_0:\phi_1]$ from $X=\mathfrak{X}^{\textrm{rig}}=\mathfrak{X}\otimes K$ to $(\mathbb{P}^1)^{\textrm{rig}}$ and explicit formula for the coordinate functions $\phi_0$ and $\phi_1$ is given there. We will denote $X_\gamma=\{u\in X(\mathbb{C}_p)\,\,:\,\,|u|\leq \gamma \}$, $\partial X_\gamma=\{u\in X(\mathbb{C}_p)\,\,:\,\,|u|= \gamma \}$ and $X_\gamma^\circ=\{u\in X(\mathbb{C}_p)\,\,:\,\,|u|< \gamma \}$ for $\gamma>0$.\\

{\it Terminology} In this paper we often refer to a formal $A$-module law simply by formal $A$-module.

%%
%% A subsection is started by
%%

%\subsection{Things that need to be done.}
%Prove theorems.

%%%%%%%%%%%%%%%%%%%%%%%%%%%%%%%%%%%%%%%%%%%%%%%%%%%%%%%%%%%%%%%%%%%%%%
\section{Domains of Injectivity}
%%%%%%%%%%%%%%%%%%%%%%%%%%%%%%%%%%%%%%%%%%%%%%%%%%%%%%%%%%%%%%%%%%%%%%
\subsection{The special case of quasi-canonical liftings}
\begin{para}
In this paragraph, let $\mathcal{O}$ be the ring of integers for the unramified field extension $K[\zeta_{q^2-1}]$ of $K$ where $\zeta_{q^2-1}$ is an primitive $q^2-1$ root of unity.\\
Let $M$ be the completion of the maximal unramified field extension and $W$ be its ring of integers. \\

From \cite[prop. 5.3]{Gro}, we know that if $u_0$ is a quasi-canonical lifting of level $s>0$, then $u_0$ is an uniformizer of the ring of integers $W'$ of an abelian extension of $M'/M$. The Galois group $Gal(M'/M)\simeq (\mathcal{O}/\pi^s\mathcal{O})^\times/(A/\pi^s A)^\times$ acts simply transitively on the quasi-canonical lifting of level $s$. In particular $|u_0|=|\pi|^{\frac{1}{q^s+q^{s-1}}}$.\\

The (upper numbering) ramifications subgroups of $Gal(M'/M)$ are given by:
$$Gal(M'/M)=G^0 \supsetneq G^1 \supsetneq \dots \supsetneq G^s=\{\,\,e\,\,\}$$ where
$G^i\simeq (A+\pi^{i}\mathcal{O}/\pi^s\mathcal{O})^\times/(A/\pi^s A)^\times$. \\
This is because we can find a finite abelian extension $M''/M$ generated by torsion points $\tilde{F}[\pi^s]=\{\alpha\in\mathfrak{m}_{\mathbb{C}_p}  \,\,|\,\,[\pi^n](\alpha)=0 \}$ of a height 1 formal $\mathcal{O}-$module $\tilde{F}$, where $\tilde{F}$ is a lifting of $\bar{F}$ over $\mathcal{O}$. Then from \cite[ch. III, sec. 8]{Neu}, we get the ramification subgroups of $Gal(M''/M)^i\simeq (1+\pi^i\mathcal{O})/(1+\pi^s\mathcal{O})$ for $1\leq i\leq s$. Since $M'$ is a subextension with $Gal(M''/M')\simeq (A+\pi^s\mathcal{O}/\pi^s\mathcal{O})^\times$, we get the corresponding ramification subgroups. \\

Define $\psi(v)=\int^v_0 [G^0:G^t]dt$.\\

For integers $0\leq n\leq s$,
$$\psi(n)=\sum_{i=1}^{n} (q+1)q^{i-1}=\frac{(q+1)(q^n-1)}{q-1}$$.\\

So the (lower numbering) ramifications subgroups are
$$Gal(M'/M)=G_0 \supsetneq G_{q+1} \supsetneq \dots \supsetneq G_{\frac{(q+1)(q^s-1)}{q-1}}=\{\,\,e\,\,\}.$$

If $g\in G_{\frac{(q+1)(q^n-1)}{q-1}}\backslash G_{\frac{(q+1)(q^{n+1}-1)}{q-1}}$, then
$$|g\cdot u_0-u_0|=|u_0|^{1+\frac{(q+1)(q^n-1)}{q-1}}=|u_0|^{\frac{q^{n+1}+q^{n}-2}{q-1}}=|\pi|^{\frac{q^{n+1}+q^{n}-2}{q^{s-1}(q^2-1)}}$$
for $0 \leq n \leq s-1$.\\

So there are exactly $q-1$ quasi-canonical liftings of level $s$ contained in $\{\,\,u \,:\, |u-u_0|=|\pi|^{\frac{q^{s}+q^{s-1}-2}{q^{s-1}(q^2-1)}}\,\,\}$;
there are exactly $q^2-q$ quasi-canonical liftings of level $s$ contained in $\{\,\,u \,:\, |u-u_0|=|\pi|^{\frac{q^{s-1}+q^{s-2}-2}{q^{s-1}(q^2-1)}}\,\,\}$; \dots ;
there are exactly $q^{s-1}-q^{s-2}$ quasi-canonical liftings of level $s$ contained in $\{\,\,u \,:\, |u-u_0|=|\pi|^{\frac{q^{2}+q^{1}-2}{q^{s-1}(q^2-1)}}\,\,\}$, and
there are exactly $q^s$ quasi-canonical liftings of level $s$ contained in $\{\,\,u \,:\, |u-u_0|=|\pi|^{\frac{1}{q^{s-1}(q+1)}}\,\,\}$.\\

\end{para}

\begin{para}
Now in this paragraph, let $\mathcal{O}$ be the ring of integers for the ramified field extension $K[\sqrt{\pi}]$ of $K$ where $\sqrt{\pi}$ is one of the square root of $\pi$.\\
Let $M$ be the completion of the maximal unramified field extension of $K[\sqrt{\pi}]$ and $W$ be its ring of integers. \\

Similarly, from \cite[prop. 5.3]{Gro}, we know that if $u_0$ is a quasi-canonical lifting of level $s>0$, then $u_0$ is an uniformizer of the ring of integers $W'$ of an abelian extension of $M'/M$. The Galois group $Gal(M'/M)\simeq (\mathcal{O}/\pi^s\mathcal{O})^\times/(A/\pi^s A)^\times$ acts simply transitively on the quasi-canonical lifting of level $s$. In particular $|u_0|=|\sqrt{\pi}|^{\frac{1}{q^s}}=|\pi|^{\frac{1}{2q^s}}$.\\

The (upper numbering) ramifications subgroups of $Gal(M'/M)$ are given by:
$$Gal(M'/M)=G^0=G^1 \supsetneq G^2=G^3 \supsetneq \dots \supsetneq G^{2s-2}=G^{2s-1}\supsetneq G^{2s}=\{\,\,e\,\,\}$$ where
$G^{2i}\simeq (A+\pi^{i}\mathcal{O}/\pi^s\mathcal{O})^\times/(A/\pi^s A)^\times$. \\
This is because we can find a finite abelian extension $M''/M$ generated by torsion points $\tilde{F}[\sqrt{\pi}^{2s}]$ of a height 1 formal $\mathcal{O}-$module $\tilde{F}$, where $\tilde{F}$ is a lifting of $F$ over $\mathcal{O}$. Then from \cite[ch. III, sec. 8]{Neu}, we get the ramification subgroups of $Gal(M''/M)^j\simeq (1+\sqrt{\pi}^j\mathcal{O})/(1+\pi^s\mathcal{O})$ for $1\leq j\leq 2s$. Since $M'$ is a subextension with $Gal(M''/M')\simeq (A+\pi^s\mathcal{O}/\pi^s\mathcal{O})^\times$, we get the corresponding ramification subgroups. \\

Define $\psi(v)=\int^v_0 [G^0:G^t]dt$.\\

For integers $1\leq n\leq s$,
$$\psi(2n-1)=1+\sum_{i=2}^{n} 2q^{i-1}=2\frac{(q^n-1)}{q-1}-1$$.\\

So the (lower numbering) ramifications subgroups are
$$Gal(M'/M)=G_1 \supsetneq G_{2q+1} \supsetneq \dots \supsetneq G_{2\frac{(q^s-1)}{q-1}-1}\supsetneq \{\,\,e\,\,\}.$$

If $g\in G_{2\frac{(q^n-1)}{q-1}-1}\backslash G_{2\frac{(q^{n+1}-1)}{q-1}-1}$, then
$$|g\cdot u_0-u_0|=|u_0|^{2\frac{(q^n-1)}{q-1}}=|\pi|^{\frac{q^{n}-1}{q^{s}(q-1)}}$$
for $1 \leq n \leq s$.\\

So there are exactly $q-1$ quasi-canonical liftings of level $s$ contained in $\{\,\,u \,:\, |u-u_1|=|\pi|^{\frac{q^{s}-1}{q^{s}(q-1)}}\,\,\}$;
there are exactly $q^2-q$ quasi-canonical liftings of level $s$ contained in $\{\,\,u \,:\, |u-u_1|=|\pi|^{\frac{q^{s-1}-1}{q^{s}(q-1)}}\,\,\}$; \dots , and
there are exactly $q^{s}-q^{s-1}$ quasi-canonical liftings of level $s$ contained in $\{\,\,u \,:\, |u-u_1|=|\pi|^{\frac{1}{q^{s}}}\,\,\}$.\\

\end{para}

\subsection{Facts from Lubin's papers}

\begin{para}
\label{PaLu}
In this section, We will need argument in Lubin's paper \cite{Lu1} .\\

Let's begin with recalling some results in \cite{Lu2}. For any deformation $(F,[\cdot])$ of $(\bar{F},[\cdot]_{\bar{F}})$ over a complete local noetherian $A$-algebra $R$ inside $\fro_{\mathbb{C}_p}$, the torsions submodules $F[\pi^n]=\{\alpha\in\mathfrak{m}_{\mathbb{C}_p}  \,\,|\,\,[\pi^n](\alpha)=0 \}$ is a free $A/\pi^n$ of rank 2. If $C$ is a submodule of $F[\pi^n]$ for some $n$, then define $f(x)=\prod_{\alpha\in C}F(x,\alpha)$ and there exists another formal $A$ module $(F_C,[\cdot]_C)$ over $R[[\alpha]]_{\alpha\in C}$ s.t. this $f$ is the formal $A$ module homomorphism from $(F,[\cdot])$ to $(F_C,[\cdot]_C)$ with kernel $C$. We called that $f$ is a quasi-isogeny of height $m$ if $C$ has $q^m$ elements. Quasi-isogeny induce an equivalence relation on the deformation space. For example, there is another quasi-isogeny $g:(F_C,[\cdot]_C)\rightarrow (F,[\cdot])$ of height $m$ s.t. $g\circ f=[\pi]_F$ and $f\circ g=[\pi]_{F_C}$.\\

Suppose $u_0\in X(\mathbb{C}_p)$ with $|u_0|=\gamma$. Consider the formal typical $A$ module $(F_{u_0},[\cdot]_{u_0}$) over $A[[u_0]]$ as a deformation of $\bar{F}$ corresponding to $u_0$. Since $F_{u_0}[\pi]$ is 2-dimensional $\mathbb{F}_q=A/\pi$ vector space, there are $q+1$ distinct subspaces, denoted them by $C_i$, $i=1,2,\dots,q+1$. Define $(F_i,[\cdot]_i)$ the formal module and $f_i=\prod_{\alpha\in C_i}F(x,\alpha)$ s.t. $f_i$ is a quasi-isogeny of height 1. Notice that $[\zeta]_i(x)=\zeta x$ as $[\zeta]_{u_0}=\zeta x$ and $f_i(\zeta x)=\zeta f_i(x)$ for $\zeta$ s.t. $\zeta^{q-1}=1$. As a consequence, $[\pi]_i(x)$ only has terms in degrees of the form $1+n(q-1)$ for some $n\geq 0$. \\ %Without loss of generality, choose a basis element $\alpha_i$ of $C_i$ s.t. $\alpha_1=\alpha$, $\alpha_{q+1}=\beta$  $\alpha_i=F_{u_0}(\alpha,(\zeta_{q-1})^{i-1}\beta)$ for $i\neq 1,q+1$ where $\zeta_{q-1}$ is a fixed primitive root of unity of order $q-1$. 
%\begin{numequation}
%\label{PaCh}
%0\rightarrow C_i \rightarrow (F_{u_0},[\cdot]_{u_0}) \stackrel{f_i}{\rightarrow}(F_i,[\cdot]_i)
%\end{numequation}
%is exact.  \\

For each $F_i$, there is a $u_i\in X(\mathbb{C}_p)$ with corresponding deformation $(F_{u_i},[\cdot]_{u_i}$) s.t. $(F_{u_i},[\cdot]_{u_i}$) is strictly isomorphic to $(F_i,[\cdot]_i)$ by an unique isomorphism $\theta_i$, see \cite{GH} prop 12.10 and \cite{LT} prop 3.1. So we have the quasi-isogeny $\theta_i\circ f_i:(F_{u_0},[\cdot]_{u_0})\rightarrow (F_{u_i},[\cdot]_{u_i})$.\\

% In particular $(1-\pi^{q-1})u_i$ is the coefficient of $x^{q}$ in $[\pi]_i(x)$.  Since the universal deformation is defined over a graded ring, it then follows that $\theta_i(x)$ has coefficients that can be expressed in terms of polynomials of  coefficients of $[\pi]_i(x)$. Indeed, if $\theta_i=\sum_n a_n x^n$, and $[\pi]_i(x)=\sum_n b_n x^n$ and assign $deg(a_n)=n-1=deg(b_n)$ and $deg(x)=-1$, then $a_n$ can be expressed as a homogenous element in the graded ring $A[b_2, b_3,\dots]$. So in particular, $\theta_i$ is also a *-isomorphism as in GH.\\

%\begin{numequation}
 %\label{eq:}%
%\label{PaCh}
%0\rightarrow C_i \rightarrow (F_{u_0},[\cdot]_{u_0}) \stackrel{\theta_i\circ f_i}{\rightarrow}(F_{u_i},[\cdot]_{u_i})
%\end{numequation}
%where both formal modules are $A$-typical. 
\begin{dfn}
\label{PaCh}
We will simply call $u_i$ and $u_0$ are corresponds (by a quasi-isogeny) of height 1.\\
\end{dfn}

If we look closer to $u_i$ given by a $u_0$, there are two possibilities, for details, see \cite{Lu1}:\\
(Case 1) If $|u_0|=\gamma\leq |\pi|^{\frac{q}{q+1}}$, then all non zero elements in $F_{u_0}[\pi]$ has norm $|\pi|^{\frac{1}{q^2-q}}$ and all $u_i$ has norm $|\pi|^{\frac{1}{q+1}}$.\\
(Case 2) If $\gamma> |\pi|^{\frac{q}{q+1}}$, then $q^2-q$ elements in $F_{u_0}[\pi]$ has norm $\gamma^{\frac{1}{q^2-q}}$ and $q-1$ elements has smaller norm $(\frac{|\pi|}{\gamma})^{\frac{1}{q-1}}$. We can choose the generator $\beta$ of $C_{q+1}$ to be the one has norm $(\frac{|\pi|}{\gamma})^{\frac{1}{q-1}}$. Then $|u_{q+1}|<|u_i|=\gamma^{\frac{1}{q}}$ for $i\neq q+1$.\\

We can always factor $f$ into the composition of $n$ quasi-isogenies of height 1.\\
\end{para}

\subsection{Period map and quasi-isogeny}

\begin{para}
Under the period map $\Phi$, all $u_i$, $i=1,2,\dots,q+1$ that correspond to the same $u_0$ has the same image. Indeed, we have the following proposition:
\begin{prop}
Suppose $u_0,u_1\in X(\mathbb{C}_p)$ and $u_1$ correspond to $u_0$. Then 
\[[\phi_0(u_1),\phi_1(u_1)]=[\pi\phi_1(u_0),\phi_0(u_0)].\]
\end{prop}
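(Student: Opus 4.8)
The plan is to use the functoriality of the Gross--Hopkins period map under isogenies, which is built into its construction as the position of the Hodge line inside the Dieudonn\'e module. Recall from \cite{GH} that for $u\in X(\mathbb{C}_p)$ (so $|u|<1$, and the reduction of $F_u$ is canonically $\bar{F}$) one has $\Phi(u)=[\phi_0(u):\phi_1(u)]$ equal to the line $\omega_{F_u}\subset\mathbb{D}(\bar{F})\otimes\mathbb{C}_p$, written in the fixed identification $\mathbb{D}(\bar{F})\otimes\mathbb{C}_p\cong\mathbb{C}_p^{2}$ for which $D^{\times}=\mathrm{End}^{0}(\bar{F})^{\times}$ acts through the standard embedding $D\hookrightarrow\mathrm{GL}_2(K_2)$ carrying a Frobenius uniformizer to $\left(\begin{smallmatrix}0&\pi\\ 1&0\end{smallmatrix}\right)$. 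One can also argue purely with the explicit power series for $\phi_0,\phi_1$ in \cite{GH} together with the functional equation they satisfy, tracking the substitution below at that level.

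First I would pin down the isogeny. Write $\psi:=\theta_1\circ f_1\colon F_{u_0}\to F_{u_1}$ for the height-$1$ quasi-isogeny of paragraph \ref{PaLu}. By \ref{PaLu} every nonzero $\alpha\in C_1$ has $|\alpha|<1$, so $\bar\alpha=0$ and $\overline{f_1}(x)=\prod_{\alpha\in C_1}\overline{F_{u_0}}(x,\bar\alpha)=x^{q}$; that is, $\overline{f_1}$ is the $q$-power Frobenius $\mathbf{F}$ of $\bar{F}$, a uniformizer of $D$. Moreover $\theta_1$ reduces to the identity on $\bar{F}$ — being the unique strict isomorphism $F_1\xrightarrow{\sim}F_{u_1}$ (cf.\ \cite{GH,LT}), and both $F_1$ and $F_{u_1}$ reducing to $\bar{F}$ — so $\overline{\psi}=\overline{\theta_1}\circ\overline{f_1}=\mathbf{F}$. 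In particular $\overline{\psi}$ does not depend on which of the $q+1$ lines $C_i\subset F_{u_0}[\pi]$ was chosen; this is the reason all of $u_1,\dots,u_{q+1}$ share the same image under $\Phi$.

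Then I would apply $\mathbb{D}(-)$ to $\psi$: since $\psi$ is an isogeny, $\mathrm{Lie}(\psi)=\psi'(0)=\prod_{\alpha\in C_1\setminus\{0\}}\alpha\neq 0$ is an isomorphism of the $1$-dimensional Lie algebras, so the map $\mathbb{D}(\overline{\psi})=\mathbb{D}(\mathbf{F})$ induced on $\mathbb{D}(\bar{F})\otimes\mathbb{C}_p$ carries $\omega_{F_{u_1}}$ isomorphically onto $\omega_{F_{u_0}}$. In the coordinates above this reads $\mathbf{F}\cdot\Phi(u_1)=\Phi(u_0)$, i.e.\ $\Phi(u_1)=\mathbf{F}^{-1}\cdot\Phi(u_0)$; since $\mathbf{F}^{2}=\pi$ is central, $\mathbf{F}^{-1}$ induces the same projective transformation as $\mathbf{F}=\left(\begin{smallmatrix}0&\pi\\ 1&0\end{smallmatrix}\right)$, which sends $[\phi_0(u_0):\phi_1(u_0)]$ to $[\pi\phi_1(u_0):\phi_0(u_0)]$, and the claim follows; note this transformation is an involution, consistent with $u_0$ also corresponding to $u_1$ via the dual quasi-isogeny. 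The step I expect to be the main obstacle is the matching of conventions: verifying that the Gross--Hopkins coordinate functions $\phi_0,\phi_1$ really are coordinates in a basis on which $D^{\times}$ acts by the standard embedding with $\mathbf{F}\mapsto\left(\begin{smallmatrix}0&\pi\\ 1&0\end{smallmatrix}\right)$ — equivalently, carrying out the substitution $x\mapsto\theta_1(f_1(x))$ in the explicit formulas for $\phi_0,\phi_1$ while keeping track of every constant, via $\log_{F_{u_1}}(\theta_1(f_1(x)))=\psi'(0)\,\log_{F_{u_0}}(x)$ — together with the usual linear-versus-semilinear Frobenius bookkeeping that makes $\mathbb{D}(\mathbf{F})$ an honest element of $\mathrm{GL}_2$ acting on $\mathbb{P}^1(\mathbb{C}_p)$.
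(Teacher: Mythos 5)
Your strategy is, at its core, the same as the paper's: transport the Hodge line through the height-one quasi-isogeny $\psi=\theta_1\circ f_1$, observe that only its reduction $\overline{\psi}=\mathbf{F}$ (the $q$-Frobenius, since the coefficients of $f_1(x)-x^q$ all have norm $<1$) can matter, and then let Frobenius act on the period coordinates. The paper implements exactly this via the commutative diagram of Lie algebras of universal additive extensions (\ref{eq:CDoUE}) and the functionals $c_0,c_1$ on quasi-logarithms, so conceptually you are on the paper's track, just phrased in Dieudonn\'e-module language.

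The gap is that you never establish the one fact the argument actually rests on, and you say so yourself: that in the $(\phi_0,\phi_1)$-coordinates Frobenius acts by $\left(\begin{smallmatrix}0&\pi\\ 1&0\end{smallmatrix}\right)$. This is not a background convention you can quote. The group $D^\times$ does not act on $X$ itself --- the Frobenius isogeny only induces the height-one correspondence $u_0\mapsto\{u_1,\dots,u_{q+1}\}$ --- and the equivariance proved in \cite{GH} is for $\fro_D^\times=\mathrm{Aut}(\bar F)$ only; the compatibility of $\Phi$ with the Frobenius correspondence, in a specific normalization, is precisely the content of the proposition, so invoking ``the standard embedding carrying a Frobenius uniformizer to that matrix'' is dangerously close to assuming what is to be proved. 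The paper closes exactly this hole by a short computation: since $f(x)\equiv x^q$ modulo coefficients of norm $<1$, Prop.\ 22.2 of \cite{GH} gives $c_i(g_0\circ f)(u_1)=c_i(g_0\circ x^q)(u_1)$, and shifting indices in $g_0(u,x)=\sum_k m_k(u)x^{q^k}$ yields $c_0(g_0\circ x^q)=\pi c_1(g_0)$ and $c_1(g_0\circ x^q)=c_0(g_0)$, which is the asserted matrix and hence $[\phi_0(u_1):\phi_1(u_1)]=[\pi\phi_1(u_0):\phi_0(u_0)]$. Note also that your identity $\log_{F_{u_1}}\circ\,\psi=\psi'(0)\log_{F_{u_0}}$ only controls the logarithm, a one-dimensional piece of the two-dimensional space $\mathrm{RigExt}(F_u,\mathbb{G}_a)$ on which $c_0,c_1$ are defined, so the verification really has to be carried out at the level of quasi-logarithms as the paper does; until that computation (or an equivalent normalization check) is written down, your proof is a correct outline with its decisive step missing.
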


\begin{proof}
The argument is similar to that  of proving the vector bundle $\mathcal{L}ie(E_u)$ of the deformation space is generically flat where $E_u$ is the Universal additive extension of the universal deformation $F_u$. Cf \cite{GH} section 22. We have an exact sequence of formal $A$-modules:
\[0\rightarrow \mathbb{G}_a \otimes Ext(F_u,\mathbb{G}_a)\rightarrow E_u \rightarrow F_u\rightarrow 0\]
and hence an exact sequence of free $A[[u]]$-modules:
\[0\rightarrow Lie(\mathbb{G}_a \otimes Ext(F_u,\mathbb{G}_a))\stackrel{\psi}{\rightarrow} Lie(E_u)=Hom(RigExt(F_u,\mathbb{G}_a),A[[u]]) \stackrel{\varphi}{\rightarrow}Lie(F_u)\rightarrow 0.\] 

From \cite{GH} Section 21, there exists $c_0,c_1\in Hom(RigExt(F_u,\mathbb{G}_a),S)$\\
$=H^0(X\otimes K_2,\mathcal{L}ie(E_u)\otimes_{A[[u]]}(\mathcal{O}_X\otimes K_2))$ that forms a basis over $S=H^0(X\otimes K_2,\mathcal{O}_X\otimes K_2)$. Let's recall the definition of $c_0$ and $c_1$. Here $RigExt(F_u,\mathbb{G}_a)$ is a free $A[[u]]$-module of rank 2 with basis represented by quasi-logarithm $g_0(u,x),g_1(u,x)\in S[[x]]$. The (quasi-)logarithm $g_0(u,x)$ satisfies the functional equation \[g_0(u,x)=x+\frac{u}{\pi}g_0(u^q,x^q)+\frac{1}{\pi}g_0(u^{q^2},x^{q^2})\]
and $g_1(u,x)=\frac{\partial}{\partial u}g_0(u,x)$. Any quasi-logarithm of $F_u$ is of the form $g_u(x)=g(u,x)=\sum_{k\geq 0} m_k(u)x^{q^k}$ where $m_k(u)\in S$. And 
\[c_0(g)(u):=\lim \pi^k m_{2k}(u)\] and \[c_1(g)(u):=\lim \pi^k m_{2k-1}(u).\]

In particular, $c_i(g_j)(0)=\delta_{ij}$ for $0\leq i,j\leq 1$.\\

The period map is defined by $u\mapsto [\phi_0(u),\phi_1(u)]$ where $\phi_i(u)=c_i(g_0)(u)$. The vector $(\phi_0(u),\phi_1(u))$ here is the normal vector to the subspace given by the image of $\psi$. Indeed, if  we identify $Lie(F_u)$ with $A[[u]]$, $\varphi(c)=c(g_0)$ where $c\in Lie(E_{u})=Hom(RigExt(F_u,\mathbb{G}_a),A[[u]])$. And thus $\psi\otimes S(Lie(\mathbb{G}_a\otimes Ext(F_{u},\mathbb{G}_a))\otimes S)=\{a_0c_0+a_1c_1\in Lie(E_u)\otimes T\,\,:\,\, a_0\phi_0(u)+a_1\phi_1(u)=0\}$.\\

Suppose $f$ is the quasi-isogeny of height 1 from $F_{u_0}$ to $F_{u_1}$ that defined over $T$ where $T$ is a finite extension of $\widehat{K_2(u_0,u_1)}$, then we have the following commutative diagram
\begin{numequation}
\label{eq:CDoUE}
\begin{array}{ccccccccc}
0 &\rightarrow & Lie(\mathbb{G}_a\otimes Ext(F_{u_0},\mathbb{G}_a))\otimes T &\rightarrow & Lie(E_{u_0})\otimes T& \rightarrow  & Lie(F_{u_0})\otimes T  &\rightarrow & 0\\
& & \downarrow & & \downarrow & & \downarrow& & \\	
0 &\rightarrow & Lie(\mathbb{G}_a\otimes Ext(F_{u_1},\mathbb{G}_a))\otimes T &\rightarrow & Lie(E_{u_1})\otimes T &\rightarrow  & Lie(F_{u_1})\otimes T  &\rightarrow & 0\\
\end{array}
\end{numequation} where the vertical maps are induced from $f$.\\

Here the middle vertical map sends $c_i$ to $\tilde{c}_i$ where $\tilde{c}_i(g)(u_1)=c_i(g\circ(f))(u_1)$. It follows from the commutative diagram \ref{eq:CDoUE} that the vector $(\tilde{c}_0(g_0)(u_1),\tilde{c}_1(g_0)(u_1))=C(\phi_0(u_0),\phi_1(u_0))$ for some constant $C\in T$.\\

Now we need to compute $\tilde{c}_i(g_0)(u_1)=c_i(g_0\circ f)(u_1)$.\\
Because the coefficients of $f(x)-x^q$ have norm strictly less than 1. Then Prop 22.2 in \cite{GH} implies $\tilde{c}_i(g_0)(u_1)=c_i(g_0\circ(x^q))(u_1)$. It is not hard to check that $c_0(g_0\circ(x^q))(u_1)=\pi c_1(g_0)(u_1)$ and $c_1(g_0\circ(x^q))(u_1)=c_0(g_0)(u_1)$. we can then conclude that $C\neq 0$ and hence the result.\\

\end{proof}

Hence if we look at a fiber $\Phi^{-1}(w)$ restricted to an annulus $\{u\in X(\mathbb{C}_p)\,\,|\,\, |u|=\gamma\}$, denoted by $\partial X_\gamma$, then they all come from a single $u_0$ with $|u_0|<|\pi|^{\frac{1}{q+1}}$ in the following sense:\\

For $u_1\in \Phi^{-1}(w)\cap \partial X_\gamma$, there exists a free $A/\pi^n$ rank 1 submodule $C$ in $F_{u_0}[\pi^n]$, where $n$ is the smallest integer with $\gamma^{q^n}<|\pi|^{\frac{1}{q+1}}$, s.t. $C$ is the kernel for a quasi-isogeny $f:(F_{u_0},[\cdot]_{u_0})\rightarrow (F_{u_1},[\cdot]_{u_1})$ of height $n$. \\

\end{para}

\subsection{Main result for fiber of the period map}
\begin{para}
Let me state the main result:
\begin{thm}
\label{ThmInj}
\begin{enumerate}
	\item Suppose $\gamma$ is not of the form $|\pi|^{\frac{1}{q^s+q^{s+1}}}$ for $s\geq 0$ and $u_1\in \partial X_\gamma$. Then we can order the element in $\Phi^{-1}(\Phi(u_1))\cap \partial X_\gamma$ as $u_1,u_2,\dots,u_{q^n}$ where $n$ is the smallest non-negative integer with $\gamma^{q^n}<|\pi|^{\frac{1}{q+1}}$ such that 
	\[|u_i-u_1|=(\frac{|\pi|}{\gamma^{2q^{j-1}}})^{\frac{1}{q^{j}-q^{j-1}}}\] for $1\leq j\leq n$ and $q^{j-1}<i\leq q^{j}$.\\
	\item Suppose $\gamma=|\pi|^{\frac{1}{q^s+q^{s+1}}}$ for some $s\geq 0$ and $u_1\in \partial X_\gamma$. Then we can order the element in $\Phi^{-1}(\Phi(u_1))\cap \partial X_\gamma$ as $u_1,u_2,\dots,u_{q^s+q^{s+1}}$ such that 
	\[|u_i-u_1|=(\frac{|\pi|}{\gamma^{2q^{j-1}}})^{\frac{1}{q^{j}-q^{j-1}}}\] for $1\leq j\leq s$ and $q^{j-1}<i\leq q^{j}$ and $|u_i-u_1|=\gamma(=(\frac{|\pi|}{\gamma^{2q^{s}}})^{\frac{1}{q^{s+1}-q^{s}}})$ for $i>q^s$.\\
\end{enumerate}
\end{thm}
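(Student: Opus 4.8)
The strategy is to describe the fiber $\Phi^{-1}(\Phi(u_1)) \cap \partial X_\gamma$ recursively by peeling off one height-$1$ quasi-isogeny at a time, using the Proposition just proved (which says that corresponding-of-height-$1$ points share a period) and the two cases recalled from \cite{Lu1} in \ref{PaLu}. The key observation is that a point $u_1 \in \partial X_\gamma$ with $\gamma^q \geq |\pi|^{1/(q+1)}$ is in Case 2 relative to a unique $u_0$ of smaller norm $\gamma^{1/q}$ such that $u_1$ corresponds to $u_0$, and that every other point of $\Phi^{-1}(\Phi(u_1)) \cap \partial X_\gamma$ also corresponds (by a height-$1$ quasi-isogeny) to this same $u_0$. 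One first establishes, by induction on $n$ (the smallest integer with $\gamma^{q^n} < |\pi|^{1/(q+1)}$), that $\Phi^{-1}(\Phi(u_1)) \cap \partial X_\gamma$ is exactly the set of $u$ arising as $\theta_C \circ f_C$ applied to $u_0$ for the free rank-$1$ submodules $C \subset F_{u_0}[\pi^n]$, where $u_0$ is the unique point of norm $\gamma^{1/q^n}$ sitting below $u_1$ (with the convention that when $\gamma^{1/q^n}$ is already small enough, i.e. $\leq |\pi|^{q/(q+1)}$, we are in Case 1 and $u_0$ is a base point; the threshold case $\gamma = |\pi|^{1/(q^s+q^{s+1})}$ is exactly when $\gamma^{1/q^s}=|\pi|^{1/(q+1)}$ is the boundary, giving the extra factor in part (2)).

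Counting: the submodules $C \cong A/\pi^n$ of $F_{u_0}[\pi^n] \cong (A/\pi^n)^2$ are in bijection with $\mathbb{P}^1(A/\pi^n)$, which has $q^n + q^{n-1} + \cdots + 1$ cyclic submodules of each order, but the ones giving rank-$1$ free submodules isomorphic to $A/\pi^n$ number $q^n + q^{n-1} - q^{n-1} = q^n$ in the non-threshold case — more carefully, the count of points is governed by the ramification filtration, which is the bridge to the distance formula. Here is where the computation in \S2.1 is used as a template: one shows that the function $u \mapsto |u - u_1|$ on the fiber takes the value $(|\pi|/\gamma^{2q^{j-1}})^{1/(q^j - q^{j-1})}$ on exactly $q^j - q^{j-1}$ of the points for $1 \leq j \leq n$ (and, in the threshold case, value $\gamma$ on the remaining $q^s + q^{s+1} - q^s = q^{s+1}$ points, but $q^s$ of those at the innermost stage plus $q^{s+1}$ at the boundary stage — the bookkeeping must be checked against the stated ranges $q^{j-1} < i \leq q^j$). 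The distance value itself comes from: two points $u, u'$ in the fiber that first diverge at stage $j$ (i.e.\ correspond to submodules $C, C'$ of $F_{u_0}[\pi^n]$ agreeing modulo $\pi^{j-1}$ but not $\pi^j$) satisfy $|u - u'| = |u_{\text{stage }j} - u'_{\text{stage }j}|$ where $u_{\text{stage }j}, u'_{\text{stage }j} \in \partial X_{\gamma^{q^{n-j}}}$ are two distinct height-$1$ correspondents of a common point, and by Case 2 analysis that norm difference, together with the norm $|\beta| = (|\pi|/\gamma^{q^{n-j}\cdot q})^{1/(q-1)}$ wait — one computes it directly from the shape of $f_C$ and the explicit isomorphism $\theta_C$ as in \cite{Lu1}, obtaining $(|\pi|/\gamma^{2q^{j-1}})^{1/(q^j-q^{j-1})}$.

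The main obstacle I expect is the distance computation at a single stage: showing that if $u, u'$ are the two norm-$\gamma$ points corresponding to a common $u_0$ of norm $\gamma^{1/q}$ via distinct submodules $C \neq C'$ of $F_{u_0}[\pi]$ (with $u_0$ itself large enough to be in Case 2), then $|u - u'| = (|\pi|\gamma^{-2/q})^{1/(q-1) \cdot ?}$ matches the claimed exponent. This requires tracking the leading terms of $f_C(x) = \prod_{\alpha \in C} F_{u_0}(x,\alpha)$ and of the strict isomorphism $\theta_C$ correcting $F_C$ to $F_{u}$, using that $[\pi]_{u_0}$ and all the relevant series only carry degrees $\equiv 1 \bmod (q-1)$ (noted in \ref{PaLu}); the clean way is probably to identify $u$ with a value of the $u$-coordinate readable off the action on $F_{u_0}[\pi^{n}]$ and reduce to the quasi-canonical computation of \S2.1, which is why I would first prove a lemma comparing the general fiber to a union of "Galois orbits" modelled on the quasi-canonical case. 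Verifying that the index ranges $q^{j-1} < i \leq q^j$ and the threshold correction in part (2) come out exactly right — rather than off by one layer — is the delicate bookkeeping, and I would pin it down by checking small cases $n = 0, 1$ and $s = 0$ against the explicit lists at the end of \S2.1.
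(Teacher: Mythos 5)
Your overall skeleton --- describe the fiber as a tower of height-$1$ quasi-isogeny correspondents over a common $u_0$ and then compute pairwise distances stage by stage --- matches the combinatorial structure of the fiber, but there is a genuine gap exactly where you flag the ``main obstacle'': you have no mechanism for the \emph{lower} bound (hence the equality) in $|u_i-u_1|=(|\pi|/\gamma^{2q^{j-1}})^{1/(q^j-q^{j-1})}$. Everything one can extract from the explicit shape of $f_C(x)=\prod_{\alpha\in C}F_{u_0}(x,\alpha)$ and the strict isomorphism $\theta_C$, via functoriality of the universal deformation, is a congruence of the two isogenous formal module structures modulo an ideal, i.e. an upper bound $|u_i-u_j|\leq\cdots$ (this is precisely what Lemma \ref{LeS1} and Lemma \ref{LeCoHe} give, and they are stated and proved only as inequalities). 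Your fallback --- reducing to the quasi-canonical/ramification computation of \S 2.1 via ``Galois orbits'' --- does not apply: the theorem concerns arbitrary radii $\gamma$, whereas quasi-canonical liftings occur only at the special radii $|\pi|^{1/(q^s+q^{s-1})}$ and $|\pi|^{1/(2q^s)}$, and the simply transitive Galois action with its ramification filtration, which drives \S 2.1, is not available for a general point of $\partial X_\gamma$. In this paper \S 2.1 serves as a consistency check, not as a proof template. Likewise your step ``$|u-u'|=|u_{\mathrm{stage}\,j}-u'_{\mathrm{stage}\,j}|$'' assumes that distances propagate exactly up the isogeny tower, which is again only proved in the $\leq$ direction (Lemma \ref{LeCoHe}), and your count of which height-$n$ correspondents land in $\partial X_\gamma$ is garbled ($q^n$ of the $q^n+q^{n-1}$ submodules stay on $\partial X_\gamma$, the remaining $q^{n-1}$ drop to smaller norm except at the threshold radii, which is the source of part (2)).

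The paper closes all of these gaps at once by a global rigidity argument absent from your proposal: set $\psi(x)=\phi_0(u_1)\phi_1(x)-\phi_1(u_1)\phi_0(x)$, show it is distinguished of degree $q^{n+1}+q^n+\dots+1$ on $X_\gamma$ with top coefficient of known norm, and use $|\psi'(u_1)|=|\epsilon(u_1)|=1$ to obtain the exact value of the product $\prod_{v}|u_1-v|$ over the other roots $v$ in $X_\gamma$. Listing the known roots (the tower points $u_{l,m}$ and, in the non-threshold case, the small-norm branches $v_{l,m}$) and inserting the upper bounds from Lemmas \ref{LeS1} and \ref{LeCoHe} together with the induction hypothesis one level down, the product identity forces every inequality to be an equality and simultaneously shows the fiber contains no further points, which is what justifies your claimed description of $\Phi^{-1}(\Phi(u_1))\cap\partial X_\gamma$ in the first place. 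Without this exact-product input (or some other genuine lower-bound mechanism), your induction cannot close.
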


We need several lemmas before proving the theorem.

\begin{lem}
\label{LeS1}
Suppose $|u_0|=\gamma$ and $u_i$, for $i=1,2,\dots,q+1$ corresponds to $u_0$ of height 1.
\begin{enumerate}
	\item Suppose $\gamma>|\pi|^{\frac{q}{q+1}}$. We assume $u_{q+1}$ is the only one that has smaller norm. Then $|u_i-u_j|\leq |\pi|^{\frac{1}{q-1}}\gamma^{\frac{-2}{q^2-q}}$ for $1\leq i,j \leq q$.\\
	\item Suppose $|u_0|=\gamma\leq |\pi|^{\frac{q}{q+1}}$, then $|u_i-u_j|\leq |\pi|^{\frac{1}{q+1}}$ $\forall\,\,i,j$.\\
\end{enumerate}
\end{lem}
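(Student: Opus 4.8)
\medskip

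\emph{Proof plan.}

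Part (2) will follow from the ultrametric inequality alone: by the Case~1 description recalled in~\ref{PaLu}, every $u_i$ ($1\le i\le q+1$) has norm exactly $|\pi|^{\frac1{q+1}}$, so $|u_i-u_j|\le\max(|u_i|,|u_j|)=|\pi|^{\frac1{q+1}}$ for all $i,j$. The content is part (1), and there the ultrametric inequality is \emph{not} enough: writing $\gamma=|\pi|^{\delta}$, the asserted bound $|\pi|^{\frac1{q-1}}\gamma^{-\frac2{q^2-q}}$ is strictly smaller than the common norm $\gamma^{\frac1q}$ of $u_1,\dots,u_q$ precisely when $\delta<\frac q{q+1}$, i.e. under the standing hypothesis $\gamma>|\pi|^{\frac q{q+1}}$. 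So for part (1) one must exhibit cancellation in $u_i-u_j$.

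The plan for part (1) is this. Fix a generator $\alpha_i$ of each line $C_i\subset F_{u_0}[\pi]$. Since $[\zeta]_{u_0}(x)=\zeta x$ for $\zeta\in\mu_{q-1}$, we have $C_i\setminus\{0\}=\{\zeta\alpha_i:\zeta\in\mu_{q-1}\}$, so the coefficient of $x$ in $f_i=\prod_{\gamma'\in C_i}F_{u_0}(x,\gamma')$ is $\prod_{\gamma'\in C_i\setminus\{0\}}\gamma'=\pm\alpha_i^{q-1}$, of norm $\gamma^{\frac1q}$. The parameter $u_i$ of $F_{u_0}/C_i\cong F_{u_i}$ is a function of the line $C_i$ whose leading term is a unit times this coefficient; crucially, $u_i$ is produced from $u_0$ equivariantly for the $F_{u_0}$-action, so that the difference $u_i-u_j$ should be governed by the \emph{formal} difference $\alpha_i\ominus\alpha_j:=F_{u_0}\bigl(\alpha_i,[-1]_{u_0}\alpha_j\bigr)$ rather than by the ordinary difference $\alpha_i-\alpha_j$. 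Now $C_{q+1}$ is a line in the $\bbF_q$-plane $F_{u_0}[\pi]$ complementary to each of $C_1,\dots,C_q$; rescaling each $\alpha_j$ ($2\le j\le q$) by the Teichm\"uller lift of the element of $\bbF_q^\times$ measuring $\alpha_j$ against $\alpha_1$ modulo $C_{q+1}$, one gets $\alpha_1\ominus\alpha_j\in C_{q+1}$, hence $\alpha_i\ominus\alpha_j\in C_{q+1}$ for all $i,j$ since $C_{q+1}$ is a submodule. By the Case~2 description every element of $C_{q+1}$ has norm $\le|\beta|=(|\pi|/\gamma)^{\frac1{q-1}}$, $\beta$ a generator of $C_{q+1}$. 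Feeding $|\alpha_i\ominus\alpha_j|\le|\beta|$ into the analytic dependence of $u_i$ on $u_0$ and the torsion — this is where the explicit description in Lubin \cite{Lu1} is used — one should arrive at
\[
|u_i-u_j|\ \le\ |\beta|\cdot|\alpha_i|^{q-2}\ =\ \Bigl(\tfrac{|\pi|}{\gamma}\Bigr)^{\frac1{q-1}}\gamma^{\frac{q-2}{q^2-q}}\ =\ |\pi|^{\frac1{q-1}}\gamma^{-\frac2{q^2-q}},
\]
which is the claim. An equivalent packaging is a Newton-polygon computation for the degree-$(q+1)$ polynomial in $Y$ whose roots are $u_1,\dots,u_{q+1}$ (the polynomial cutting out the height-$1$ correspondence of \cite{Lu1}): recentring it at a big root and reading off the Newton polygon returns the pairwise distances among $u_1,\dots,u_q$.

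I expect the main obstacle to be the last estimate. One must make precise that $u_i$, as a function of the line $C_i$ — equivalently of the quasi-isogeny $f_i$, via the unique strict isomorphism $\theta_i$ of \cite{GH}, Prop.~12.10 — really depends on a generator $\alpha_i$ \emph{through} the formal group $F_{u_0}$, so that $|u_i-u_j|$ is controlled by $|\alpha_i\ominus\alpha_j|$ and not merely by $|\alpha_i-\alpha_j|$. This is what makes the bound uniform over all of Case~2: one has $|\alpha_i\ominus\alpha_j|\le|\beta|$ always, whereas $|\alpha_i-\alpha_j|$ can be as large as $|\alpha_i|^2$, and the two differ once $\delta<\frac q{q+2}$. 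Beyond this one still has to identify the exact leading term; the remaining manipulations are routine ultrametric estimates. (Alternatively, by the Proposition above $u_i$ and $u_j$ lie in the common fiber $\Phi^{-1}(\Phi(u_0))$, so $\phi_0(u_i)\phi_1(u_j)=\phi_0(u_j)\phi_1(u_i)$; dividing by $u_i-u_j$ yields a symmetric power-series identity which, unwound through the expansions of $\phi_0,\phi_1$ in \cite{GH}, Section~21, leads to the same bound — but the quasi-isogeny argument above is more self-contained.)
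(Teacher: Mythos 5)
Your part (2) agrees with the paper (it is immediate from $|u_i|=|\pi|^{\frac{1}{q+1}}$ for all $i$), and for part (1) you have correctly located the two numerical ingredients: suitably chosen generators of $C_i$ and $C_j$ differ by an element of $C_{q+1}$, of norm $|\beta|=(|\pi|/\gamma)^{\frac{1}{q-1}}$, and the extra factor $|\alpha_i|^{q-2}$ comes from the fact that the construction only sees $\alpha_i^{q-1}$, giving the target bound $|\beta|\,|\alpha_i|^{q-2}=|\pi|^{\frac{1}{q-1}}\gamma^{\frac{-2}{q^2-q}}$. But the step that actually proves the lemma is missing: everything after ``feeding $|\alpha_i\ominus\alpha_j|\le|\beta|$ into the analytic dependence of $u_i$ on $u_0$ and the torsion \dots one should arrive at'' is precisely the content of the proof, and you flag it yourself as the main obstacle. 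You need a concrete mechanism converting closeness of the torsion data, i.e. $|\alpha_i^{q-1}-\alpha_j^{q-1}|\le|\pi|^{\frac{1}{q-1}}\gamma^{\frac{-2}{q^2-q}}$, into closeness of the deformation parameters $u_i,u_j$; neither the appeal to Lubin's ``explicit description'' nor the Newton-polygon variant (for which you would first have to produce the degree-$(q+1)$ polynomial with roots $u_1,\dots,u_{q+1}$ and control its coefficients) supplies it. The paper does this in three short steps: (i) Lemma \ref{LeHo} shows $f_i(x)=x\prod_{l}F_{u_0}(x,\zeta_{q-1}^{l}\alpha_i)$ depends on $\alpha_i$ only through $\alpha_i^{q-1}$, so $f_i\equiv f_j \bmod (\alpha_1)^{q-2}(\alpha_{q+1})\fro_{\mathbb{C}_p}$; (ii) from $[a]_i\circ f_i=f_i\circ[a]_{u_0}$ and the fact that $f_i$ is distinguished in degree $q$ with unit coefficient, one gets $[a]_i\equiv[a]_j$ and $F_i\equiv F_j$ modulo the same ideal; (iii) functoriality of the universal deformation then yields $|u_i-u_j|\le|\pi|^{\frac{1}{q-1}}\gamma^{\frac{-2}{q^2-q}}$. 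Without something playing the role of (ii)--(iii), your outline does not yet bound $|u_i-u_j|$.

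A secondary point: your emphasis on the formal difference $\alpha_i\ominus\alpha_j$ versus the ordinary difference is a non-issue. Choosing the generators so that $\alpha_i=F_{u_0}(\alpha_j,\zeta\alpha_{q+1})$ (the paper's ``without loss of generality $\xi=1$''), the identity $F_{u_0}(x,y)=x+y+xy(\cdots)$ gives $|\alpha_i-\alpha_j|=|\alpha_{q+1}|=|\beta|$ exactly, so the ordinary difference is already controlled; the genuine issue your Teichm\"uller rescaling is meant to address --- independence of the chosen generator of the line --- is exactly what Lemma \ref{LeHo}(1) resolves, since $f_i$ only involves $\alpha_i^{q-1}$.
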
 

\begin{proof}
\begin{enumerate}
	\item
	Assume $\alpha_i$ is the generator of $C_i$ as in paragraph \ref{PaLu}. Then $|\alpha_i|>|\alpha_{q+1}|$ for $i\neq q+1$. For distinct $i,j$ that are both less than q+1, $\alpha_i=F_{u_0}(\xi \alpha_j,\zeta \alpha_{q+1})$ for some $\xi,\zeta\in \mu_{q-1}$. Without loss of generality, assume $\xi=1$. Then $|\alpha_i-\alpha_j|=|\alpha_{q+1}|$ as $F_{u_0}(x,y)-x-y$ is divisible by $xy$. Therefore, $|(\alpha_i)^{q-1}-(\alpha_j)^{q-1}|=|\alpha_i|^{q-2}|\alpha_i-\alpha_j|=(\gamma^{\frac{1}{q^2-q}})^{q-2}((\frac{|\pi|}{\gamma})^{\frac{1}{q-1}})=|\pi|^{\frac{1}{q-1}}\gamma^{\frac{-2}{q^2-q}}$.\\

	By Lemma \ref{LeHo} below, we have $f_i\equiv f_j $ mod $(\alpha_1)^{q-2}(\alpha_{q+1})\fro_{\mathbb{C}_p}$\\
as $f_i(x)=\prod_{\alpha\in C_i}F(x,\alpha)=x(\prod^{q-1}_{l=1} F_{u_0}(x,(\zeta_{q-1})^{l}\alpha_i))$.\\
 
	Since $f_i$ are homomorphism, $[a]_i\circ f_i=f_i\circ [a]_{u_0}$ where $a\in A$ and hence modulo $(\alpha_1)^{q-2}(\alpha_{q+1})\fro_{\mathbb{C}_p}$, we have
\[ [a]_i\circ f_i=f_i\circ [a]_{u_0}\equiv f_j\circ [a]_{u_0}= [a]_j\circ f_j \equiv [a]_j \circ f_i.   \]
	Because $f_i$ is distinguish at degree $q$ and that coefficient is unit, we must have $[a]_i\equiv [a]_j$ mod $(\alpha_1)^{q-2}(\alpha_{q+1})\fro_{\mathbb{C}_p}$. Similarly, $F_i(x,y)\equiv F_j(x,y)$ mod $(\alpha_1)^{q-2}(\alpha_{q+1})\fro_{\mathbb{C}_p}$.\\

	Since the universal deformation is functorial, $|u_i-u_j|\leq |\pi|^{\frac{1}{q-1}}\gamma^{\frac{-2}{q^2-q}}$.

	\item It is clear as $|u_i|=|\pi|^{\frac{1}{q+1}}$ for all $i$.\\

\end{enumerate}

\end{proof}

\begin{lem}
\label{LeHo}
%Suppose $C$ is 1 dimensional subspace of $F_{u_0}[\pi]$ and has a basis given by $\alpha$. Let $f(x)=\prod_{\alpha'\in C}F_{u_0}(x,\alpha')=x(\prod^{q-1}_{i=1} F_{u_0}(x,(\zeta_{q-1})^{i}\alpha))$. Then\\
%\[f(x)\equiv x^q \,\, \text{mod} \,\, (\alpha^{q-1})\fro_{\mathbb{C}_p}\]

Let $g(x,y)=\prod^{q-1}_{i=1}F_{u_0}(x,(\zeta_{q-1})^{i}y)\in \fro_{\mathbb{C}_p}[[x,y]]$.\\
Then 
\begin{enumerate}
	\item $g(x,y)\in \fro_{\mathbb{C}_p}[[x^{q-1},y^{q-1}]]$
	%\item $g(x,y)=x^{q-1}-y^{q-1}+\dots$
	\item $g(x,y)=-g(y,x)$
	\item $g(x,0)=x^{q-1}$
\end{enumerate}
\end{lem}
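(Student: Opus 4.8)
The three assertions all rest on two structural features of $F_{u_0}$: it is commutative with $F_{u_0}(x,0)=x$, and, since $[\zeta]_{u_0}(x)=\zeta x$ for $\zeta\in\mu_{q-1}$, each such $\zeta$ acts as an automorphism, so that $F_{u_0}(\zeta x,\zeta y)=\zeta F_{u_0}(x,y)$. I would first record that $g\in\fro_{\mathbb{C}_p}[[x,y]]$ in the first place: after specializing $u_0\in\fro_{\mathbb{C}_p}$ the coefficients of $F_{u_0}$ lie in $\fro_{\mathbb{C}_p}$, and the $\zeta_{q-1}^{i}$ are units there, so the finite product defining $g$ stays integral. Claim (3) is then immediate, since $g(x,0)=\prod_{i=1}^{q-1}F_{u_0}(x,0)=x^{q-1}$.

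For (1) the plan is to prove that $g$ is invariant under $y\mapsto\zeta y$ and under $x\mapsto\zeta x$ separately, for every $\zeta\in\mu_{q-1}$, and then compare coefficients. Invariance in the second variable is a reindexing: multiplication by $\zeta$ permutes $\{\zeta_{q-1}^{i}\}_{i=1}^{q-1}=\mu_{q-1}$, so $g(x,\zeta y)=\prod_{i}F_{u_0}(x,\zeta\zeta_{q-1}^{i}y)=g(x,y)$. For the first variable I would use the automorphism relation to extract the scalar, $F_{u_0}(\zeta x,\zeta_{q-1}^{i}y)=\zeta\,F_{u_0}(x,\zeta^{-1}\zeta_{q-1}^{i}y)$, so that $g(\zeta x,y)=\zeta^{q-1}\prod_{i}F_{u_0}(x,\zeta^{-1}\zeta_{q-1}^{i}y)=g(x,y)$, using $\zeta^{q-1}=1$ and that $\zeta^{-1}\zeta_{q-1}^{i}$ again runs over $\mu_{q-1}$. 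Writing $g=\sum_{m,n}a_{mn}x^{m}y^{n}$, the two invariances with $\zeta=\zeta_{q-1}$ force $a_{mn}(\zeta_{q-1}^{n}-1)=0$ and $a_{mn}(\zeta_{q-1}^{m}-1)=0$ in the domain $\fro_{\mathbb{C}_p}$, hence $a_{mn}=0$ unless $(q-1)\mid m$ and $(q-1)\mid n$, which is the assertion.

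For (2) I would compute $g(y,x)$ head-on: commutativity gives $g(y,x)=\prod_{i=1}^{q-1}F_{u_0}(\zeta_{q-1}^{i}x,y)$, and the automorphism relation (applied with second argument $\zeta_{q-1}^{-i}y$) rewrites each factor as $\zeta_{q-1}^{i}F_{u_0}(x,\zeta_{q-1}^{-i}y)$. Since $i\mapsto\zeta_{q-1}^{-i}$ permutes $\mu_{q-1}$, the product of the $F_{u_0}$-factors is again $g(x,y)$, leaving $g(y,x)=\big(\prod_{i=1}^{q-1}\zeta_{q-1}^{i}\big)g(x,y)$. The scalar is $\zeta_{q-1}^{q(q-1)/2}$, i.e.\ the product of all elements of the cyclic group $\mu_{q-1}$; pairing each $\zeta$ with $\zeta^{-1}$ collapses this to the unique element of order $2$, namely $-1$ (here one uses that $q-1$ is even, i.e.\ $q$ odd), which gives $g(y,x)=-g(x,y)$.

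None of these steps is genuinely hard; the only points deserving attention are the permutation bookkeeping on $\mu_{q-1}$ in part (1) and the evaluation of $\prod_{\zeta\in\mu_{q-1}}\zeta$ in part (2). Everything else is a direct manipulation of the defining product, using only that $F_{u_0}$ is commutative, that $F_{u_0}(x,0)=x$, and that $[\zeta]_{u_0}(x)=\zeta x$.
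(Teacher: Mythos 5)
Your proof is correct and takes essentially the same route as the paper's: use $[\zeta]_{u_0}(x)=\zeta x$ to get $F_{u_0}(\zeta x,\zeta y)=\zeta F_{u_0}(x,y)$, reindex the product over $\mu_{q-1}$, and evaluate the scalar $\zeta_{q-1}^{q(q-1)/2}$, with $g(x,0)=F_{u_0}(x,0)^{q-1}=x^{q-1}$ for part (3); if anything your write-up is slightly more complete, since the paper only records the invariance in the $x$-variable (concluding $g\in\fro_{\mathbb{C}_p}[[x^{q-1},y]]$) and leaves the $y$-direction and the coefficient comparison implicit. Your parenthetical remark that $\prod_{\zeta\in\mu_{q-1}}\zeta=-1$ requires $q$ odd is well taken: the paper asserts $\zeta_{q-1}^{q(q-1)/2}=-1$ without comment, and for $q$ even this scalar is $+1$, so the caveat concerns the statement rather than any gap in your argument.
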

\begin{proof}
Since $[\zeta_{q-1}]_{u_0}(x)=\zeta_{q-1}x$, $F_{u_0}(\zeta_{q-1}x,\zeta_{q-1}y)=\zeta_{q-1}F_{u_0}(x,y)$. Hence $g(\zeta_{q-1}x,y)=\prod^{q-1}_{i=1}F_{u_0}(\zeta_{q-1}x,(\zeta_{q-1})^{i}y)=(\zeta_{q-1})^{q-1}g(x,y)=g(x,y)$. Hence $g(x,y)\in \fro_{\mathbb{C}_p}[[x^{q-1},y]]$.\\

Next, $g(y,x)=\prod^{q-1}_{i=1}F_{u_0}(y,(\zeta_{q-1})^{i}x)=\prod^{q-1}_{i=1}F_{u_0}((\zeta_{q-1})^{i}x,y)=(\zeta_{q-1})^{\frac{q(q-1)}{2}}g(x,y)=-g(x,y)$.\\

Finally, $g(x,0)=(F_{u_0}(x,0))^{q-1}=x^{q-1}$.\\
\end{proof}

\begin{lem}{Continuity of Hecke correspondence}
\label{LeCoHe}
\begin{enumerate}
	\item Suppose $\gamma>|\pi|^{\frac{q}{q+1}}$ and $u_0,u_0'\in \partial X_\gamma$. Let $u_i$ (respectively $u_i'$) $i=1,2,\dots,q+1$ correspond to $u_0$ (respectively  $u_0'$) of height 1 such that $|u_{q+1}|$ (respectively $|u_{q+1}'|$) is smallest. Assume $|u_0-u_0'|\leq d$.\\
	\begin{enumerate}
		\item If $d<|\pi|^{\frac{q}{q-1}}\gamma^{\frac{-2}{q-1}}$, then for each $u_j'$, $j<q+1$, there is some $u_i$, $i<q+1$ such that $|u_i-u_j'|<d|\pi|^{-1}\gamma^{\frac{2}{q}}$
		\item If $|\pi|^{\frac{q}{q-1}}\gamma^{\frac{-2}{q-1}}\leq d \leq \gamma$, then for each $u_j'$, $j<q+1$, $|u_i-u_j'|\leq d^{\frac{1}{q}}$ for any $i<q+1$.\\
		%\item (wrong) If $d<|\pi|^{\frac{1}{q-1}\gamma^{\frac{-2}{q-1}}}$ or if $\gamma\leq|\pi|^{\frac{1}{2}}$, then $|u_{q+1}-u_{q+1}'|<\frac{d|\pi|}{\gamma^2}$.\\
	\end{enumerate}

	\item Suppose $u_0,u_0'\in X_{|\pi|^{\frac{q}{q+1}}}$. Let $u_i$ (respectively $u_i'$) $i=1,2,\dots,q+1$ correspond to $u_0$ (respectively  $u_0'$) of height 1. Assume $|u_0-u_0'|\leq d\leq \gamma$.\\
	Then for any $u_j'$, there exists $u_i$ such that $|u_i-u_j'|<d|\pi|^{\frac{1-q}{q+1}}$.\\

\end{enumerate}
\end{lem}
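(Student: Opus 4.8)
The plan is to reduce everything to the explicit norm computations for torsion points carried out in Lemma \ref{LeS1} and paragraph \ref{PaLu}, combined with the functoriality of the universal deformation map used at the end of the proof of Lemma \ref{LeS1}. The statement is a "continuity of the Hecke correspondence" result: moving $u_0$ by a small amount $d$ moves the associated $q+1$ height-1 correspondents by a controlled amount, and we must make the dependence on $d$ explicit in the three regimes.

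First I would set up the comparison at the level of torsion points. Let $\alpha_1,\dots,\alpha_{q+1}$ (resp. $\alpha_1',\dots,\alpha_{q+1}'$) be generators of the subspaces $C_i\subset F_{u_0}[\pi]$ (resp. $C_i'\subset F_{u_0'}[\pi]$) as in \ref{PaLu}. Because $[\pi]_{u_0}$ and $[\pi]_{u_0'}$ have coefficients that are congruent modulo $d\,\fro_{\bbC_p}$ (universal deformation depends on $u$ analytically, so coefficient-by-coefficient the difference is $O(d)$), a Newton-polygon / successive-approximation argument shows that, after suitable matching of indices, each root $\alpha_j'$ of $[\pi]_{u_0'}$ is close to a root $\alpha_i$ of $[\pi]_{u_0}$. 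Quantitatively: if $|\alpha_i|=\rho$ and the roots of $[\pi]_{u_0}$ of norm $\rho$ are simple-ish in the sense that the relevant factor of $[\pi]_{u_0}(x)/x$ has all its slopes equal, then a perturbation of size $d$ in the constant term / lower coefficients of that factor perturbs the root by roughly $d\,\rho^{1-(\text{multiplicity of that slope})}$. In Case 1 ($\gamma>|\pi|^{q/(q+1)}$) the $q$ large roots have norm $\gamma^{1/(q^2-q)}$ and together with the factor $x$ they account for the Newton segment of horizontal length $q$ supporting $[\pi]_{u_0}$ at small slope; the small root has norm $(|\pi|/\gamma)^{1/(q-1)}$. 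One then gets $|\alpha_i-\alpha_j'|$ small — small enough that $\alpha_i$ and $\alpha_j'$ generate "the same" subspace direction — provided $d$ is below the threshold where the Newton polygon itself changes shape, which is exactly the threshold $|\pi|^{q/(q-1)}\gamma^{-2/(q-1)}$ appearing in part (1): note $(\,|\pi|^{q/(q-1)}\gamma^{-2/(q-1)}\,)=\bigl(|\pi|^{1/(q-1)}\gamma^{-2/(q^2-q)}\bigr)^{q}$, i.e. it is the $q$-th power of the distance $|\pi|^{1/(q-1)}\gamma^{-2/(q^2-q)}$ between two large roots found in Lemma \ref{LeS1}(1).

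With the torsion points matched, I would propagate to the $u_i$'s exactly as at the end of the proof of Lemma \ref{LeS1}. If $|\alpha_i-\alpha_j'|\le \delta$ for all matched pairs, then $f_i\equiv f_j'$ modulo a power of $\delta$ (via Lemma \ref{LeHo}, since $f_i(x)=x\prod_{l}F_{u_0}(x,\zeta_{q-1}^l\alpha_i)$ and the product is a power series in $x^{q-1},\alpha_i^{q-1}$), hence $[a]_i\equiv [a]_j'$ and $F_i\equiv F_j'$ modulo the same power, and functoriality of the universal deformation gives $|u_i-u_j'|\le$ that power. Tracking the exponents: in Case 1(a) the matched torsion points satisfy $|\alpha_i-\alpha_j'|\lesssim d\,\gamma^{-(q-2)/(q^2-q)}\cdot(\text{something})$, and pushing through $f_i$ (which divides out roughly $\alpha_i^{q-1}$-worth of size and is distinguished of degree $q$) multiplies/divides by appropriate powers of $|\alpha_i|=\gamma^{1/(q^2-q)}$, landing on the stated bound $d|\pi|^{-1}\gamma^{2/q}$. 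For Case 1(b), once $d\ge |\pi|^{q/(q-1)}\gamma^{-2/(q-1)}$ the Newton polygon no longer separates the $q$ large roots of $F_{u_0}$ from those of $F_{u_0'}$ in the fine way, so the best uniform statement is the crude $|u_i-u_j'|\le d^{1/q}$ — which is just the observation that $|u_i|=\gamma^{1/q}$ (Case 2 of \ref{PaLu}, i.e. the norm of a correspondent is the $q$-th root of $|u_0|$ in the relevant range) combined with $d\le\gamma$, so $d^{1/q}\le\gamma^{1/q}=|u_i|$, and the $u_j'$ live in the same ball. For part (2), $u_0,u_0'$ lie in $X_{|\pi|^{q/(q+1)}}$, so by Case 1 of \ref{PaLu} all nonzero torsion points have the single norm $|\pi|^{1/(q^2-q)}$ and all $u_i,u_i'$ have norm $|\pi|^{1/(q+1)}$; the Newton polygon of $[\pi]_{u_0}(x)/x$ is a single segment of slope $1/(q^2-q)$ over length $q^2-q$, perturbing its coefficients by $d$ perturbs each root by $\lesssim d\cdot(|\pi|^{1/(q^2-q)})^{1-(q^2-q)}$, and after pushing through $f_i$ one collects the factor $|\pi|^{(1-q)/(q+1)}$, giving $|u_i-u_j'|<d|\pi|^{(1-q)/(q+1)}$.

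\textbf{The main obstacle} I expect is making the Newton-polygon perturbation step rigorous and uniform: one must argue that matching $\alpha_i\leftrightarrow\alpha_j'$ can be done consistently (a bijection on the $q+1$ lines, compatibly with the order-of-magnitude hierarchy when $\gamma>|\pi|^{q/(q+1)}$), and that the "multiplicity of the slope" bookkeeping gives precisely the exponents in (a) versus the coarser bound in (b) at exactly the threshold $|\pi|^{q/(q-1)}\gamma^{-2/(q-1)}$. A clean way to handle this is to factor $[\pi]_{u_0}-[\pi]_{u_0'}$ coefficientwise, use that the relevant factors are distinguished polynomials (after Weierstrass preparation over $\fro_{\bbC_p}$, or just working with the finite set of roots), and invoke continuity of roots of distinguished polynomials in terms of the $p$-adic size of coefficient perturbations; the quantitative version is standard but the exponent accounting is where care is needed. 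Everything downstream of the torsion-point comparison is a repetition of the argument already given for Lemma \ref{LeS1}, so I would present that part briefly and concentrate the write-up on the torsion-point estimates in each of the three cases.
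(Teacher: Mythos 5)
Your overall plan coincides with the paper's: evaluate $[\pi]_{u_0}(\alpha')=[\pi]_{u_0}(\alpha')-[\pi]_{u_0'}(\alpha')$, bound it by $d|\alpha'|^q$ (the factor $|\alpha'|^q$ coming from the fact that $u_0-u_0'$ divides all coefficients and the first nonzero term of the difference has degree $q$), compare with the factorization of $[\pi]_{u_0}$ over $F_{u_0}[\pi]$ to find a nearby torsion point $\alpha$, and then rerun the $f_i\equiv f_j'$ congruence from the proof of Lemma \ref{LeS1} via Lemma \ref{LeHo} and functoriality of the universal deformation. That skeleton is fine, but two steps carrying the actual content are gapped.

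First, case 1(b): your justification of $|u_i-u_j'|\le d^{1/q}$ (``$|u_i|=|u_j'|=\gamma^{1/q}$ and $d\le\gamma$, so the $u_j'$ live in the same ball'') proves nothing, since two points of norm $\gamma^{1/q}$ can be at distance $\gamma^{1/q}$, which exceeds $d^{1/q}$ whenever $d<\gamma$. The paper still needs the root estimate in this range: when $|\alpha-\alpha'|\ge|\beta|$ the product over $F_{u_0}[\pi]$ equals $\gamma|\alpha-\alpha'|^q$, whence $|\alpha-\alpha'|\le d^{1/q}\gamma^{\frac{2-q}{q^2-q}}$ and $|\alpha^{q-1}-(\alpha')^{q-1}|\le d^{1/q}$; the congruence argument then gives the bound $d^{1/q}$ for the \emph{matched} index $i$, and only afterwards does one invoke Lemma \ref{LeS1}(1), namely $|u_i-u_{i'}|\le|\pi|^{\frac{1}{q-1}}\gamma^{\frac{-2}{q^2-q}}\le d^{1/q}$, together with the ultrametric inequality, to get the bound for \emph{every} $i<q+1$ as the statement demands. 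Second, in part (2) your Newton-polygon bookkeeping is off: the nonzero points of $F_{u_0}[\pi]$ all have norm $|\pi|^{\frac{1}{q^2-1}}$ (slope segment of length $q^2-1$, not $q^2-q$; the value $|\pi|^{\frac{1}{q^2-q}}$ in \ref{PaLu} is a typo which you inherited), and your generic ``perturb coefficients by $d$, roots move by $d\rho^{1-m}$'' estimate drops the essential factor $|\alpha'|^q$ from the $x^q$-divisibility of $[\pi]_{u_0}-[\pi]_{u_0'}$; with your exponents the chain does not land on $d|\pi|^{\frac{1-q}{q+1}}$ (the correct intermediate bound is $|\alpha-\alpha'|\le d|\pi|^{\frac{1+q-q^2}{q^2-1}}$, then multiply by $|\alpha|^{q-2}$), so the stated conclusion is asserted rather than derived. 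A minor slip in the same spirit: the threshold in (1) is the $q$-th power of the mutual distance of the correspondents $u_i$ from Lemma \ref{LeS1}(1), not of the distance between two large torsion roots, which is $(|\pi|/\gamma)^{\frac{1}{q-1}}$.
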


\begin{proof}
\begin{enumerate}
	\item
	%If $d=\gamma$, there is nothing to show as $|u_j'|=|u_j|=\gamma^{\frac{1}{q}}$.\\
	%From now on, we assume $d<\gamma$. \\
	Before we start, let look at the set $F_{u_0}[\pi]$. If $\{\alpha,\beta\}$ is a basis with $|\beta|<|\alpha|$. Then any nonzero element is either of the form $\xi\alpha$, $\zeta\beta$ or $F_{u_0}(\xi\alpha,\zeta\beta)$ where $\zeta,\xi\in \mu_{q-1}$.\\
	We observe that $|\alpha-\xi\alpha|=|\alpha-F_{u_0}(\xi\alpha,\zeta\beta)|=|\alpha-\zeta\beta|=|\alpha|$ if $\xi \neq 1$ and $|\alpha-F_{u_0}(\alpha,\zeta\beta)|=|\beta|$.\\
	
	Suppose $\alpha'\in F_{u_0'}[\pi]$ generates the kernel of the quasi-isogeny from $u_0'$ to $u_j'$, in particular, $|\alpha'|=\gamma^{\frac{1}{q^2-q}}$.	Choose $\alpha\in F_{u_0}[\pi]$ with minimal $|\alpha-\alpha'|$.\\
	
	If $|\alpha-\alpha'|<|\beta|$, then
	\[\left|\prod_{\eta\in F_{u_0}[\pi]}(\alpha'-\eta)\right|=|\alpha-\alpha'||\beta|^{q-1}|\alpha|^{q^2-q}=|\pi||\alpha-\alpha'|.\]

	If $|\beta|\leq|\alpha-\alpha'|\leq |\alpha|$, then 
	\[\left|\prod_{\eta\in F_{u_0}[\pi]}(\alpha'-\eta)\right|=|\alpha-\alpha'|^q|\alpha|^{q^2-q}=\gamma|\alpha-\alpha'|^q.\]
	On the other hand,
	\[\left|\prod_{\eta\in F_{u_0}[\pi]}(\alpha'-\eta)\right|=\left|[\pi]_{u_0}(\alpha')\right|=\left|[\pi]_{u_0}(\alpha')-[\pi]_{u_0'}(\alpha')\right|\leq d |\alpha'|^{q}=d\gamma^{\frac{1}{q-1}}\] 
	as $(u_0-u_0')$ divides every coefficients of $[\pi]_{u_0}(x)-[\pi]_{u_0'}(x)$ and the first non zero term is of degree $q$.\\
	
	\begin{enumerate}
		\item Suppose $d<|\pi|^{\frac{q}{q-1}}\gamma^{\frac{-2}{q-1}}$.\\
		Claim: $|\alpha-\alpha'|\leq d|\pi|^{-1}\gamma^{\frac{1}{q-1}}\,(<|\beta|)$.\\
		If $|\alpha-\alpha'|\geq |\beta|$, then the above steps show $\gamma|\alpha-\alpha'|^{q}\leq d\gamma^{\frac{1}{q-1}}<|\pi|^{\frac{q}{q-1}}\gamma^{\frac{-1}{q-1}}$. Hence $|\alpha-\alpha'|<(\frac{|\pi|}{\gamma})^{\frac{1}{q-1}}=|\beta|$ which is a contradiction.\\
		
		So $|\alpha-\alpha'|<|\beta|$ and so $|\pi||\alpha-\alpha'|\leq d\gamma^{\frac{1}{q-1}}$ and hence $|\alpha-\alpha'|\leq d|\pi|^{-1}\gamma^{\frac{1}{q-1}}$.\\
		
		Furthermore, $|\alpha^{q-1}-(\alpha')^{q-1}|\leq d|\pi|^{-1}\gamma^{\frac{1}{q-1}}|\alpha|^{q-2}=d|\pi|^{-1}\gamma^{\frac{2}{q}}$.\\
		
		Let $\alpha\in F_{u_0}[\pi]$ generates the kernel of the quasi-isogeny from $u_0$ to some $u_i$. Then $f_i(x)=x\prod_{l=1}^{q-1}F_{u_0}(x,(\zeta_{q-1})^l\alpha)$ and $f_j'(x)=x\prod_{l=1}^{q-1}F_{u_0'}(x,(\zeta_{q-1})^l\alpha')$.\\
		
		Since $|u_0-u_0'|\leq d$, $[a]_{u_0}(x)\equiv [a]_{u_0'}(x)$ for $a\in A$ and $F_{u_0}(x,y)\equiv F_{u_0'}(x,y)$ mod $I_d$ where $I_d:=\{a\in \mathbb{C}_p\,\,|\,\,|a|\leq d\}$. Hence,		
		\[\begin{array}{rclclc}
			f_i(x) & \equiv & x\prod_{l=1}^{q-1}F_{u_0'}(x,(\zeta_{q-1})^l\alpha)&  & \text{mod}\,\, I_d &\\
			 & \equiv & f_j(x) & & \text{mod}\,\, (\pi^{-1} \alpha^{2q-2})I_d & \text{by Lemma}\,\, \ref{LeHo}\\
		\end{array}\]
		Because $|\pi^{-1} \alpha^{2q-2}|=|\pi|^{-1}\gamma^{\frac{2}{q}}>1$, so 
		$I_d \subset (\pi^{-1} \alpha^{2q-2})I_d$. Hence $f_i\equiv f_j'$ mod $(\pi^{-1} \alpha^{2q-2})I_d$.\\
		Therefore, 
		\[[a]_{i}\circ f_i=f_i\circ [a]_{u_0}\equiv  f_j'\circ[a]_{u_0'} =[a]_{j'}\circ f_j'\equiv [a]_{j'}\circ f_i\] mod $(\pi^{-1} \alpha^{2q-2})I_d$ for $a\in A$. As $f_i$ is distinguish at degree $q$ with corresponding coefficient is unit, we conclude $[a]_{u_i}\equiv[a]_{u_j'}$ mod $(\pi^{-1} \alpha^{2q-2})I_d$. Similarly for $F_i(x,y)$ and $F_j'(x,y)$. Since the universal deformation is functorial, we have $|u_i-u_j'|\leq d|\pi|^{-1}\gamma^{\frac{2}{q}}$.\\
		
		\item Suppose $|\pi|^{\frac{q}{q-1}}\gamma^{\frac{-2}{q-1}}\leq d \leq \gamma$.\\
		Claim: $|\alpha-\alpha'|\leq d^{\frac{1}{q}}\gamma^{\frac{2-q}{q^2-q}}$.\\
		If $|\alpha-\alpha'|>d^{\frac{1}{q}}\gamma^{\frac{2-q}{q^2-q}}\geq |\pi|^{\frac{1}{q-1}}\gamma^{\frac{-1}{q-1}}=|\beta|$, then $\gamma|\alpha-\alpha'|^q\leq d\gamma^{\frac{1}{q-1}}$ implies $|\alpha-\alpha'|\leq d^{\frac{1}{q}}\gamma^{\frac{2-q}{q^2-q}}$ which is a contradiction.\\
		So $|\alpha-\alpha'|\leq d^{\frac{1}{q}}\gamma^{\frac{2-q}{q^2-q}}$ and $|\alpha^{q-1}-(\alpha')^{q-1}|\leq d^{\frac{1}{q}}\gamma^{\frac{2-q}{q^2-q}}|\alpha|^{q-2}= d^{\frac{1}{q}}$.\\
		Since $d^{\frac{1}{q}}>d$, so the arguments in part 1(a) holds and hence
		$|u_i-u_j'|\leq d^{\frac{1}{q}}$. Notice also that $d^{\frac{1}{q}}\geq |\pi|^{\frac{1}{q-1}}\gamma^{\frac{-2}{q^2-q}}$. Therefore, $|u_j'-u_i|\leq d^{\frac{1}{q}}$ for all $i=1,2,\dots,q$.\\
	\end{enumerate}	
	
	\item Suppose $u_0,u_0'\in X_{|\pi|^{\frac{q}{q+1}}}$. Then every non zero element in $F_{u_0}[\pi]$ has same norm $|\pi|^{\frac{1}{q^2-1}}$ and any two distinct elements has distance $|\pi|^{\frac{1}{q^2-1}}$.\\
	
	Suppose $\alpha'\in F_{u_0'}[\pi]$ generates the kernel of the quasi-isogeny from $u_0'$ to $u_j'$. Choose $\alpha\in F_{u_0}[\pi]$ with minimal $|\alpha-\alpha'|$. Let $\alpha$ generates the kernel of the quasi-isogeny from $u_0$ to some $u_i$. Then 
	\[\left|\prod_{\eta\in F_{u_0}[\pi]}(\alpha'-\eta)\right|=|\alpha-\alpha'||\alpha|^{q^2-1}=|\pi||\alpha-\alpha'|\leq d|\alpha'|^q\]
	as in part (1). \\
	Hence $|\alpha-\alpha'|\leq d|\pi|^{\frac{1+q-q^2}{q^2-1}}$ and $|\alpha^{q-1}-(\alpha')^{q-1}|\leq d|\pi|^{\frac{-1+2q-q^2}{q^2-1}}=d|\pi|^{\frac{1-q}{q+1}}>d$. So the same arguments show $|u_i-u_j'|\leq d|\pi|^{\frac{1-q}{q+1}}$.\\

\end{enumerate}
\end{proof}

\begin{rem}
We may ask whether the arguments apply to $|u_{q+1}-u_{q+1}'|$ in the situation in part (1) of Lemma \ref{LeCoHe}. And in most cases, we can only get $|u_{q+1}-u_{q+1}'|\leq d$ due to the difference $[\pi]_{u_0}\circ f_{q+1}- [\pi]_{u_0'}\circ f_{q+1}'$ is determined by $u_{q+1}-u_{q+1}'$ instead of $f_{q+1}-f_{q+1}'$. The case that the arguments go through is when $|\pi|^{\frac{q}{q+1}}<\gamma \leq |\pi|^{\frac{1}{2}}$ and so $|u_{q+1}-u_{q+1}'|\leq d|\pi|\gamma^{-2}$. Notice that in this case, $|u_{q+1}|=\frac{|\pi|}{|u_0|}=\frac{|\pi|}{\gamma}$.\\
\end{rem}

\begin{proof}[Proof of Theorem \ref{ThmInj}]
Weierstrass Preparation theorem would be useful in the proof, so we need the following definition:  
\begin{dfn}
A power series $\psi(x)=\sum a_n x^n\in \mathbb{C}_p[[x]]$ is called distinguished of degree $N$ over $X_r$ if
\begin{itemize}
	\item $|a_N| r^N=\sup\{\,|a_n|r^n\, , \,r\geq 0\,\} $ 
	\item $\forall \,r>N$, $|a_n|r^n<|a_N| r^N$
\end{itemize}

\end{dfn}
If $\psi(x)$ is distinguished of degree $N$ over $X_r$, then $\psi(x)$ has exactly $N$ roots(including multiplicity) in $X_r$. Furthermore, if the coefficients of $\psi(x)$ are contained in some finite extension of $\mathbb{Q}_p$, then all roots in $X_r$ are algebraic.\\

We will use induction on $n$ to prove the theorem.\\
When $n=0$, the statement holds automatically as $\Phi$ is injective on $\{|u|<|\pi|^{\frac{1}{q+1}}\}$. It can be seen by looking at the coordinate function $\phi_0$ and $\phi_1$ of $\Phi$ and observe that the function $\phi_1(u)-w\phi_(u)$ is distinguish of degree 1 for $u\in \{|u|<|\pi|^{\frac{1}{q+1}}\}$ and for a fixed $w\in \Phi(\{|u|<|\pi|^{\frac{1}{q+1}}\})$. For the explicit formula of $\phi_0$ and $\phi_1$, see \cite{GH} sec 25.\\

When $n=1$, either $\gamma=|\pi|^{\frac{1}{q+1}}$ or $|\pi|^{\frac{1}{q+1}}<\gamma<|\pi|^{\frac{1}{q^2+q}}$.\\
If $\gamma=|\pi|^{\frac{1}{q+1}}$ and $|u_1|=\gamma$, then there is a $|u_0|\leq |\pi|^{\frac{q}{q+1}}$ s.t. $u_1$ correspond to $u_0$ of height 1. Let us change our notation and use $u_{1,1}$ for $u_1$ and $u_{0,1}$ for $u_0$. Let $u_{1,2},\dots,u_{1,q+1}$ be the elements that also correspond to $u_{0,1}$. We will show that the equalities $|u_{1,1}-u_{1,i}|=|\pi|^{\frac{1}{q+1}}$ hold for $ i \neq 1$.\\
Define $\psi(x)=\phi_0(u_{1,1})\phi_1(x)-\phi_1(u_{1,1})\phi_0(x)\in \mathbb{C}_p[[x]]$ for $|x|\leq |\pi|^{\frac{1}{q+1}}$.\\
Then
\begin{enumerate}
	\item $\psi(x)$ is distinguish of degree $q+1$ for $|x|\leq |\pi|^{\frac{1}{q+1}}$ and the corresponding coefficient has norm $|\pi|^{\frac{-q}{q+1}}$.\\
	\item $\{u_{1,i}\}$ are simple roots of $\psi$ in $X_\gamma$.\\
	\item $|\frac{\partial\psi}{\partial x}(u_{1,1})|=|\epsilon(u_{1,1})|=1$. see \cite{GH} sec 25.\\ 
\end{enumerate}
So $|\prod_{i\neq 1}(u_{1,i}-u_{1,1})||\pi|^{\frac{-q}{q+1}}=1$. However, $|\prod_{i\neq 1}(u_{1,i}-u_{1,1})|\leq |\pi|^{\frac{q}{q+1}}$ by Lemma \ref{LeS1} (2). Therefore, we can conclude that $|u_{1,i}-u_{1,1}|=|\pi|^{\frac{1}{q+1}}$ for all $i\neq 1$.\\

If $|\pi|^{\frac{1}{q+1}}<\gamma<|\pi|^{\frac{1}{q^2+q}}$, then there exists $u_0$ s.t. $|\pi|^{\frac{q}{q+1}}<|u_0|=\gamma^q<|\pi|^{\frac{1}{q+1}}$ and $u_1$ corresponds to $u_0$ of height 1. Let $u_1,u_2,\dots,u_{q+1}$ correspond to $u_0$ with $|u_{q+1}|$ is smallest. Define $u_{1,i}=u_i$ for $i\neq q+1$, $u_{0,1}=u_0$ and $v_{0,1}=u_{q+1}$. In particular, $|v_{0,1}|=\frac{|\pi|}{|u_0|}$ and $|\pi|^{\frac{q}{q+1}}<|v_{0,1}|<|\pi|^{\frac{1}{q+1}}$.\\
Define $\psi(x)=\phi_0(u_{1,1})\phi_1(x)-\phi_1(u_{1,1})\phi_0(x)\in \mathbb{C}_p[[x]]$ for $|x|\leq \gamma$.\\
Then
\begin{enumerate}
	\item $\psi(x)$ is distinguish of degree $q+1$ for $|x|\leq \gamma$ and the corresponding coefficient has norm $|\pi|^{-1}\gamma$.\\
	\item $\{u_{1,i}\}_{i=1,\dots,q}\cup\{v_{0,1}\}$ are simple roots of $\psi$ in $X_\gamma$.\\
	\item $|\frac{\partial\psi}{\partial x}(u_{1,1})|=|\epsilon(u_{1,1})|=1$.\\ 
\end{enumerate}
So $|\prod^{q}_{i=2}(u_{1,i}-u_{1,1})||v_{0,1}-u_{1,1}||\pi|^{-1}\gamma=1$, implies
$|\prod^{q}_{i=2}(u_{1,i}-u_{1,1})|=|\pi|\gamma^{-2}$. Meanwhile, $|\prod^{q}_{i=2}(u_{1,i}-u_{1,1})|\leq |\pi|\gamma^{-2}$ by Lemma \ref{LeS1} (1). So we conclude $|u_{1,1}-u_{1,i}|=|\pi|^{\frac{1}{q-1}}\gamma^{\frac{-2}{q-1}}$ for each $i\neq 1$.\\

Now assume Theorem \ref{ThmInj} holds for $\gamma<|\pi|^{\frac{1}{q^n+q^{n+1}}}$ for some $n\geq 1$.\\
We need to show that the statement holds for $|\pi|^{\frac{1}{q^n+q^{n+1}}} \leq \gamma <|\pi|^{\frac{1}{q^{n+1}+q^{n+2}}}$.\\

Suppose $\gamma=|\pi|^{\frac{1}{q^n+q^{n+1}}}$. Fix a $u_1\in \partial X_{\gamma}$. Denote $u_{n+1,1}=u_1$. There exists $u_{n,1}$ corresponds to $u_{n+1,1}$ with norm $\gamma^q$. There exists $u_{n-1,1}$ corresponds to $u_{n,1}$ with norm $\gamma^{q^2}$ and so on. Then we have $u_{l,1}$ for $l=0,1,\dots,n+1$ with $|u_{0,1}|\leq |\pi|^{\frac{q}{q+1}}$ and $|u_{l,1}|=\gamma^{q^{n+1-l}}=|\pi|^{\frac{1}{q^l+q^{l-1}}}$ for $l> 0$. Set $u_{0,2}=u_{0,1}$for convenience. Further more, there exists $u_{l,m}$ for $1\leq l\leq n+1$ and $1\leq m \leq q^l+q^{l-1}$ such that $|u_{l,m}|=|u_{l,1}|$ and $u_{l,m}$ corresponds to $u_{l-1,\left\lfloor \frac{m-1}{q}\right\rfloor+1}$ where $\left\lfloor z \right\rfloor$ denote the integral part of $z$. Then $u_{l,m}$ has same image under $\Phi$ as $u_{n+1,1}$ if $n+1-l$ is even.\\

Here, we want to show $|u_{n+1,1}-u_{n+1,m}|=(\frac{|\pi|}{\gamma^{2q^{j-1}}})^{\frac{1}{q^{j}-q^{j-1}}}$ for $1\leq j \leq n $ and $q^{j-1}<m\leq q^{j}$, or equivalent, both $u_{n+1,m}$ and $u_{n+1,1}$ correspond to $u_{n+1-j, 1}$ of smallest possible height $j$. And want to show $|u_{n+1,1}-u_{n+1,m}|=\gamma$ for $m>q^n$.\\

\begin{figure}[htbp]
	\centering
		\includegraphics[scale=0.50]{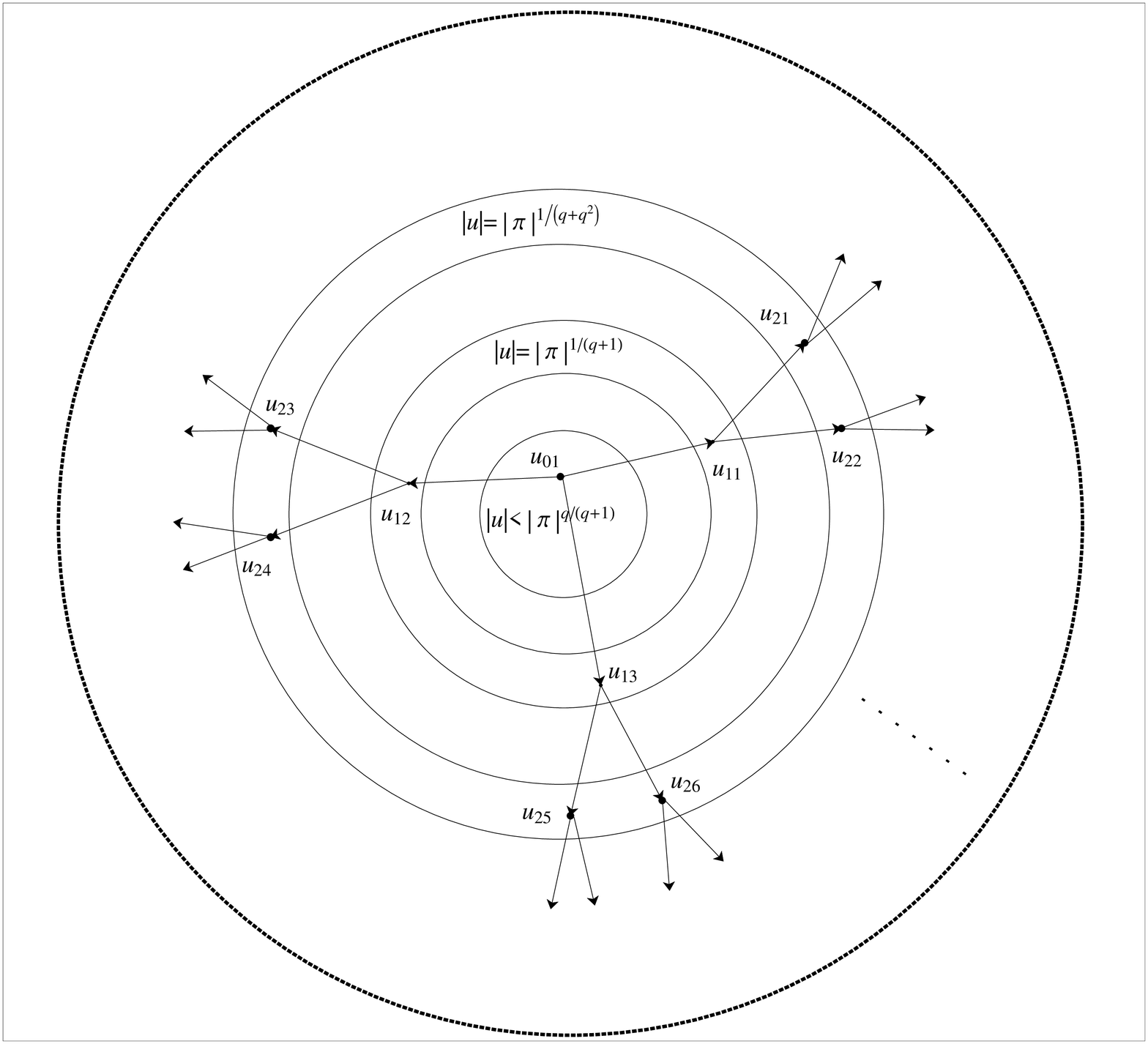}
	\caption{Illustration when $q=2$ and $|u_{01}|\leq|\pi|^{\frac{q}{q+1}}$}
	\label{fig:c1}
\end{figure}

Define $\psi(x)=\phi_0(u_{n+1,1})\phi_1(x)-\phi_1(u_{n+1,1})\phi_0(x)$.\\
Then
\begin{enumerate}
	\item $\psi(x)$ is distinguish of degree $q^{n+1}+q^n+\dots+1$ for $|x|\leq |\pi|^{\frac{1}{q^{n}+q^{n+1}}}$ and the corresponding coefficient has norm $|\pi|^{-n-1}\gamma^{q^n+q^{n-1}+\dots+1}$.\\
	\item $\{u_{l,m}\}_{n+1-l\,\,\text{is even}}$ are simple roots of $\psi$ in $X_\gamma$.\\
	\item $|\frac{\partial\psi}{\partial x}(u_{n+1,1})|=|\epsilon(u_{n+1,1})|=1$.\\ 
\end{enumerate}

So 
\[
\begin{array}{l}
\displaystyle{\left|\prod_{i=1 }^{\left\lfloor \frac{n+1}{2}\right\rfloor}\left(\prod_{m=1 }^{q^{n-2i}+q^{n-2i+1}}(u_{n+1,1}-u_{n+1-2i,m})\right)\right|\left|\prod^{q^{n}+q^{n+1}}_{m=2}(u_{n+1,1}-u_{n+1,m})\right|} \\
\displaystyle{=\left|\pi\right|^{n+1}\gamma^{-q^n-q^{n-1}-\dots-1}}.\\
\end{array}
\]

Since $|(u_{n+1,1}-u_{n+1-2i,m})|=\gamma$ for $i>0$, the above equality becomes
\[\left|\prod^{q^{n}+q^{n+1}}_{m=2}(u_{n+1,1}-u_{n+1-2i,m})\right|=|\pi|^{n+1}\gamma^{-q^{n}-2q^{n-1}-2q^{n-2}-\dots-2}.\] 

On the other hand, from the induction hypothesis, we get $|u_{n,1}-u_{n,m}|=(\frac{|\pi|}{(\gamma^q)^{2q^{j-1}}})^{\frac{1}{q^{j}-q^{j-1}}}$ for $1\leq j\leq n-1 $ and $q^{j-1}<m\leq q^{j}$ and $|u_{n,1}-u_{n,m}|=\gamma^q$ for $m>q^{n-1}$. Apply Lemma \ref{LeCoHe} and Lemma \ref{LeS1}(2) to $\{u_{n,m}\}$, then we get
$|u_{n+1,1}-u_{n+1,m}|\leq (\frac{|\pi|}{\gamma^{2q^{j-1}}})^{\frac{1}{q^{j}-q^{j-1}}}$ for $1\leq j\leq n $ and $q^{j-1}<m\leq q^{j}$ and $|u_{n+1,1}-u_{n+1,m}|\leq\gamma$ for $m>q^n$. Hence, $|\prod^{q^{n}+q^{n+1}}_{m=2}(u_{n+1,1}-u_{n+1-2i,m})|\leq|\pi|^{n+1}\gamma^{-q^{n}-2q^{n-1}-2q^{n-2}-\dots-2}$. So we conclude $|u_{n+1,1}-u_{n+1,m}|=(\frac{|\pi|}{\gamma^{2q^j}})^{\frac{1}{q^{j+1}-q^{j}}}$ for $1\leq j\leq n $ and $q^{j-1}<m\leq q^{j}$ and $|u_{n+1,1}-u_{n+1,m}|=\gamma$ for $m>q^n$.\\

Suppose $|\pi|^{\frac{1}{q^n+q^{n+1}}}<\gamma < |\pi|^{\frac{1}{q^{n+1}+q^{n+2}}}$ 
Fix a $u_1\in \partial X_{\gamma}$. Denote $u_{n+1,1}=u_1$. There exists $u_{n,1}$ corresponds to $u_{n+1,1}$ with norm $\gamma^q$. There exists $u_{n-1,1}$ corresponds to $u_{n,1}$ with norm $\gamma^{q^2}$ and so on. Then we have $u_{l,1}$ for $l=0,1,\dots,n+1$ with $|u_{l,1}|=\gamma^{q^{n+1-l}}$ for $l\geq 0$. There is an unique  $v_{0,1}$ corresponds to $u_{0,1}$ with $|v_{0,1}|=\frac{|\pi|}{|u_{0,1}|}$. There exists $u_{l,m}$ for $1\leq l\leq n+1$ and $1\leq m \leq q^l$  such that $|u_{l,m}|=|u_{l,1}|$ and $u_{l,m}$ corresponds to $u_{l-1,\left\lfloor \frac{m-1}{q}\right\rfloor+1}$. And there exists $v_{l,m}$ for $1\leq l\leq n$ and $1\leq m \leq q^l$ such that $|v_{l,m}|=|v_{0,1}|^{\frac{1}{q^l}}$ and $v_{l,m}$ corresponds to $v_{l-1,\left\lfloor \frac{m-1}{q}\right\rfloor+1}$. Notice that $|u_{l,m}|<|u_{n+1,1}|$ as well as $|v_{l,m}|<|u_{n+1,1}|$ for $l<n+1$. Here $u_{l,m}$ as well as $v_{l-1,m}$ has the same image as $u_{n+1,1}$ under $\Phi$ if $n+1-l$ is even.\\

\begin{figure}[htbp]
	\centering
		\includegraphics[scale=0.50]{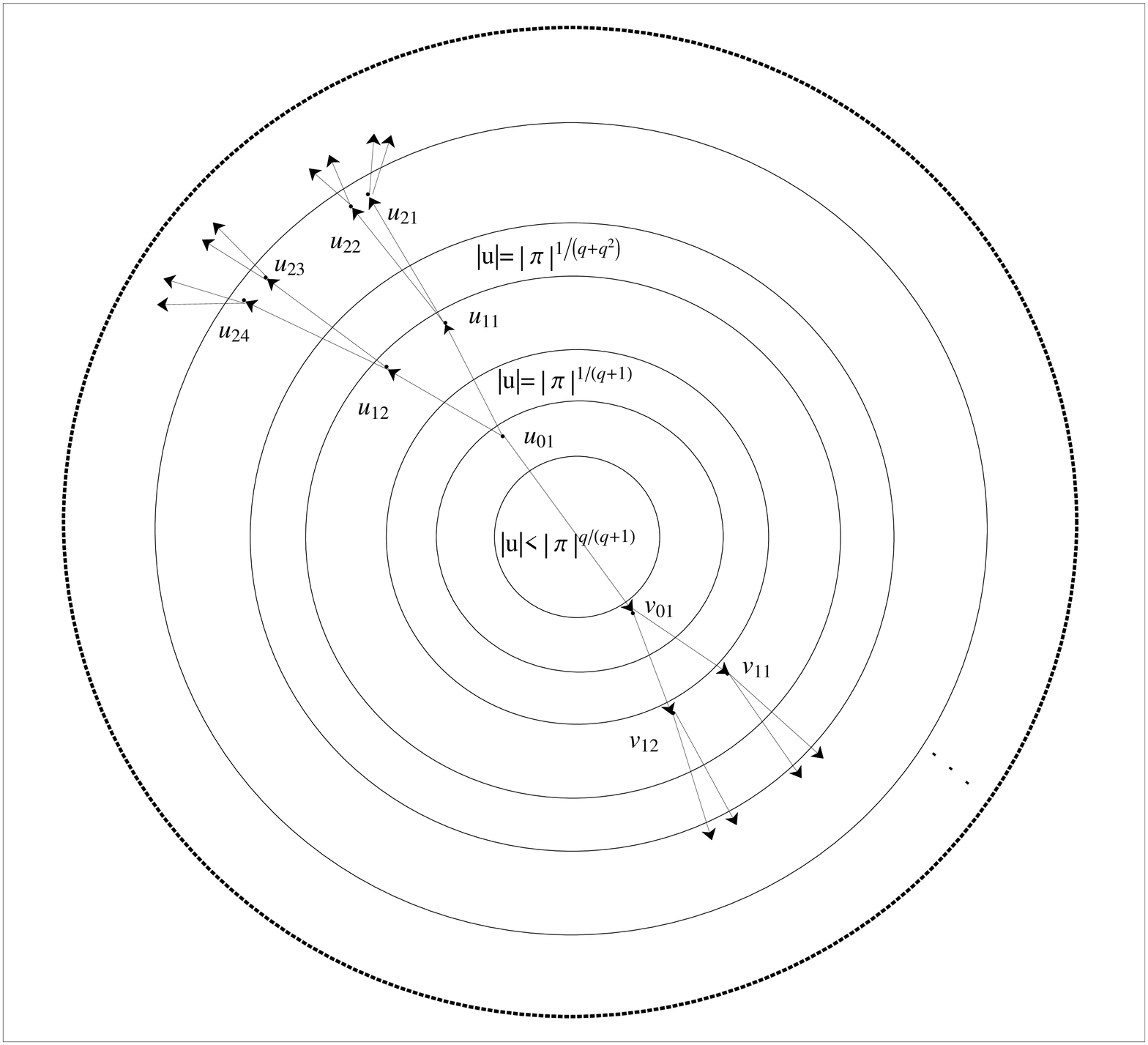}
	\caption{Illustration when $q=2$ and $|\pi|^{\frac{q}{q+1}}<|u_{01}|<|\pi|^{\frac{1}{q+1}}$}
	\label{fig:c2}
\end{figure}

We need to show $|u_{n+1,1}-u_{n+1,m}|=(\frac{|\pi|}{\gamma^{2q^{j-1}}})^{\frac{1}{q^{j}-q^{j-1}}}$ for $1\leq j\leq n+1 $ and $q^{j-1}<m\leq q^{j}$.\\

Define $\psi(x)=\phi_0(u_{n+1,1})\phi_1(x)-\phi_1(u_{n+1,1})\phi_0(x)$.\\
Then
\begin{enumerate}
	\item $\psi(x)$ is distinguish of degree $q^{n+1}+q^n+\dots+1$ for $|x|\leq |\pi|^{\frac{1}{q^{n}+q^{n+1}}}$ and the corresponding coefficient has norm $|\pi|^{-n-1}\gamma^{q^n+q^{n-1}+\dots+1}$.\\
	\item $\{u_{l,m}\}_{n+1-l\,\,\text{is even}}\coprod\{v_{l-1,m}\}_{n+1-l\,\,\text{is even}}$ are simple roots of $\psi$ in $X_\gamma$.\\
	\item $|\frac{\partial\psi}{\partial x}(u_{n+1,1})|=|\epsilon(u_{n+1,1})|=1$.\\ 
\end{enumerate}
So 
\[\begin{array}{l}
\displaystyle{\left|\prod_{ i=1 }^{\left\lfloor \frac{n+1}{2}\right\rfloor}\left(\prod_{m=1}^{q^{n+1-2i}}(u_{n+1,1}-u_{n+1-2i,m})\right)\right|\left|\prod_{m=2}^{q^{n+1}}(u_{n+1,1}-u_{n+1,m})\right|} \\
\displaystyle{\times\left|\prod_{ i=0}^{ \left\lfloor \frac{n}{2}\right\rfloor}\left(\prod_{ m=1}^{ q^{n-2i}}(u_{n+1,1}-v_{n-2i,m})\right)\right|} \\
\displaystyle{=|\pi|^{n+1}\gamma^{-q^n-q^{n-1}-\dots-1}}.\\
\end{array}\]

Hence, $\left|\prod_{m=2}^{q^n+1}(u_{n+1,1}-u_{n+1,m})\right|=|\pi|^{n+1}\gamma^{-2q^n-2q^{n-1}-\dots-2}$.\\

But the induction hypothesis implies $\left|\prod_{m=2}^{q^n+1}(u_{n+1,1}-u_{n+1,m})\right|\leq|\pi|^{n+1}\gamma^{-2q^n-2q^{n-1}-\dots-2}$. Therefore, we conclude $|u_{n+1,1}-u_{n+1,m}|=(\frac{|\pi|}{\gamma^{2q^{j-1}}})^{\frac{1}{q^{j}-q^{j-1}}}$ for $1\leq j\leq n+1 $ and $q^{j-1}<m\leq q^{j}$.\\

\end{proof}

\begin{cor}
The equality of Lemma \ref{LeS1} holds and Lemma \ref{LeCoHe} 1(a) holds for unique $i$.
\end{cor}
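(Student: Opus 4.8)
\emph{Plan.} The corollary records two facts: that the inequalities in Lemma~\ref{LeS1} are actually equalities, and that the index $i$ produced by Lemma~\ref{LeCoHe}~1(a) is unique. The plan is to deduce the first from Theorem~\ref{ThmInj} and the second from the first by a one‑line ultrametric estimate.

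\emph{Sharpness of Lemma~\ref{LeS1}.} I would fix $u_0$ with $|u_0|=\gamma$ and let $u_1,\dots,u_{q+1}$ correspond to $u_0$ of height $1$. By the proposition of \S2.3 all the $u_i$ have the common image $[\pi\phi_1(u_0):\phi_0(u_0)]$ under $\Phi$, so they lie in a single fibre. In case~(1), where $\gamma>|\pi|^{q/(q+1)}$, the points $u_1,\dots,u_q$ all have norm $\gamma':=\gamma^{1/q}$ and hence lie in $\partial X_{\gamma'}$; applying Theorem~\ref{ThmInj} to this fibre with parameter $\gamma'$ (taking either of two of the points as base point of the enumeration), one sees that for distinct $i,j\le q$ the distance $|u_i-u_j|$ must be one of the finitely many values $\bigl(|\pi|\,(\gamma')^{-2q^{l-1}}\bigr)^{1/(q^{l}-q^{l-1})}$ occurring there; these strictly increase with $l$, and the smallest (at $l=1$) is $\bigl(|\pi|\,(\gamma')^{-2}\bigr)^{1/(q-1)}=|\pi|^{1/(q-1)}\gamma^{-2/(q^2-q)}$. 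Since Lemma~\ref{LeS1}(1) already bounds $|u_i-u_j|$ above by this minimum, the distance must equal it, which is exactly the claimed equality; if $\gamma'$ happens to be of the exceptional shape $|\pi|^{1/(q^s+q^{s+1})}$ — necessarily with $s\ge1$, since $\gamma>|\pi|^{q/(q+1)}$ forces $\gamma'>|\pi|^{1/(q+1)}$ — I would run the same argument using Theorem~\ref{ThmInj}(2), whose list of distances again has minimum $\bigl(|\pi|\,(\gamma')^{-2}\bigr)^{1/(q-1)}$. In case~(2), where $\gamma\le|\pi|^{q/(q+1)}$, all of $u_1,\dots,u_{q+1}$ have norm $\gamma':=|\pi|^{1/(q+1)}=|\pi|^{1/(q^0+q^1)}$, so Theorem~\ref{ThmInj}(2) with $s=0$ gives that the fibre meets $\partial X_{\gamma'}$ in exactly $q+1$ points with all mutual distances equal to $\gamma'$; hence $\{u_1,\dots,u_{q+1}\}$ is that whole set and $|u_i-u_j|=|\pi|^{1/(q+1)}$ for $i\ne j$, again the claimed equality.

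\emph{Uniqueness in Lemma~\ref{LeCoHe}~1(a).} Keeping the notation there, $d<|\pi|^{q/(q-1)}\gamma^{-2/(q-1)}$; multiplying through by $|\pi|^{-1}\gamma^{2/q}$ gives $d\,|\pi|^{-1}\gamma^{2/q}<|\pi|^{1/(q-1)}\gamma^{-2/(q^2-q)}$. By the sharpness just established, $|u_i-u_{i'}|=|\pi|^{1/(q-1)}\gamma^{-2/(q^2-q)}$ for any two distinct indices $i,i'\le q$. So if two distinct $i,i'\le q$ both satisfied $|u_i-u_j'|<d\,|\pi|^{-1}\gamma^{2/q}$, the ultrametric inequality would force $|u_i-u_{i'}|\le\max\bigl(|u_i-u_j'|,|u_{i'}-u_j'|\bigr)<d\,|\pi|^{-1}\gamma^{2/q}<|u_i-u_{i'}|$, which is impossible. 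Hence at most one $u_i$ ($i\le q$) lies within $d\,|\pi|^{-1}\gamma^{2/q}$ of $u_j'$, i.e.\ the index $i$ of Lemma~\ref{LeCoHe}~1(a) is unique.

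\emph{Main obstacle.} The only step that needs a word of care is the case~(1) claim that the $q$ points of Lemma~\ref{LeS1} realize the \emph{smallest} inter-point distance occurring in their fibre; this is immediate once Theorem~\ref{ThmInj} is available, since that theorem lists all the distances that can occur and Lemma~\ref{LeS1}(1) pins $|u_i-u_j|$ to the minimum of that list. Everything else is bookkeeping with the valuations of $|\pi|$ and $\gamma$. (Alternatively, these equalities can be read off directly from the base cases $n=1$ in the proof of Theorem~\ref{ThmInj}, where Lemma~\ref{LeS1}'s inequality is already combined with an exact product computation.)
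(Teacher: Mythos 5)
Your argument is correct and matches the intended derivation: the paper states this corollary without proof, the point being exactly that the product identities in the proof of Theorem \ref{ThmInj} force each factor bounded by Lemma \ref{LeS1} to attain its bound, and your route through the statement of Theorem \ref{ThmInj} (distances from any base point lie in a finite increasing list whose minimum is the Lemma \ref{LeS1} bound) together with the ultrametric triangle inequality for uniqueness in Lemma \ref{LeCoHe} 1(a) is the same mechanism. No gaps.
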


\begin{cor}
$\Phi$ is injective on $X^\circ_{|\pi|^{\frac{1}{q+1}}}$. Also, for $\gamma\geq |\pi|^{\frac{1}{q+1}}$ and any $u_0\in \partial X_{\gamma}$, $\Phi$ is injective on $\{u\in X\,\,:\,\,|u-u_0|<|\pi|^{\frac{1}{q-1}}\gamma^{\frac{-2}{q-1}}\}$. We call $\{u\in X\,\,:\,\,|u-u_0|<|\pi|^{\frac{1}{q-1}}\gamma^{\frac{-2}{q-1}}\}$  the domain of injectivity around $u_0$ with $|u_0|\geq |\pi|^{\frac{1}{q+1}}$.
\end{cor}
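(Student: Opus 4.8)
The plan is to read off both assertions directly from Theorem \ref{ThmInj}. For the first one, note that $X^\circ_{|\pi|^{\frac{1}{q+1}}}$ is exactly the region where ``$n=0$'' in the notation of Theorem \ref{ThmInj}; as recorded in the base case of that proof, for any $\gamma<|\pi|^{\frac{1}{q+1}}$, any choice of $u$ with $|u|\le\gamma$, and $w=\phi_1(u)/\phi_0(u)$ (here $\phi_0$ is a unit on the disc since $\phi_0(0)=1$), the series $\phi_1(x)-w\phi_0(x)$ is distinguished of degree $1$ over $X_\gamma$, hence has a single zero there. Since $\Phi(u')=\Phi(u)$ forces $u'$ to be such a zero, $\Phi$ is injective on $X^\circ_{|\pi|^{\frac{1}{q+1}}}$.

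For the second assertion fix $\gamma\geq |\pi|^{\frac{1}{q+1}}$, a point $u_0\in\partial X_\gamma$, and set $\delta:=|\pi|^{\frac{1}{q-1}}\gamma^{\frac{-2}{q-1}}$ and $B:=\{u\in X(\mathbb{C}_p):|u-u_0|<\delta\}$. First I would record the elementary inequality $\delta\leq\gamma$, with equality precisely when $\gamma=|\pi|^{\frac{1}{q+1}}$: raising to the power $q-1$, $\delta\leq\gamma$ is equivalent to $|\pi|\leq\gamma^{q+1}$, i.e.\ to $\gamma\geq |\pi|^{\frac{1}{q+1}}$. Hence every $u\in B$ satisfies $|u-u_0|<\delta\leq\gamma=|u_0|$, so $|u|=\gamma$ by the ultrametric inequality; that is, $B\subseteq\partial X_\gamma$, and injectivity of $\Phi$ on $B$ may be tested inside the fibers of $\Phi$ over $\partial X_\gamma$ that are described by Theorem \ref{ThmInj}.

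Now suppose $u,u'\in B$ satisfy $\Phi(u)=\Phi(u')$ with $u\neq u'$. Applying Theorem \ref{ThmInj} with $u_1=u$, the point $u'$ lies in the finite set $\Phi^{-1}(\Phi(u))\cap\partial X_\gamma$, so $|u-u'|$ equals either $\gamma$ or $\bigl(|\pi|\gamma^{-2q^{j-1}}\bigr)^{\frac{1}{q^{j}-q^{j-1}}}=|\pi|^{\frac{1}{q^{j-1}(q-1)}}\gamma^{\frac{-2}{q-1}}$ for some $j\geq 1$. Since $0<|\pi|<1$, these latter quantities strictly increase with $j$, and their minimum, attained at $j=1$, is exactly $\delta$; moreover $\gamma\geq\delta$ by the previous step. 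Hence $|u-u'|\geq\delta$. But the ultrametric inequality gives $|u-u'|\leq\max(|u-u_0|,|u'-u_0|)<\delta$, a contradiction. Therefore $u=u'$, so $\Phi$ is injective on $B$, which is the asserted domain of injectivity.

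The only real subtlety is bookkeeping rather than analysis: one must use that Theorem \ref{ThmInj} genuinely enumerates \emph{all} of $\Phi^{-1}(\Phi(u))\cap\partial X_\gamma$ (which it does by its statement), and one should check the boundary case $\gamma=|\pi|^{\frac{1}{q+1}}$, where $\delta=\gamma$ and $B$ is the largest ball still contained in $\partial X_\gamma$; there Theorem \ref{ThmInj}(2) with $s=0$ shows every other fiber point sits at distance exactly $\gamma=\delta$ from $u$, so the same contradiction applies verbatim. No input beyond Theorem \ref{ThmInj} and the ultrametric inequality is required.
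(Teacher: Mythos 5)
Your proposal is correct and follows essentially the same route as the paper: injectivity on $X^\circ_{|\pi|^{\frac{1}{q+1}}}$ from the dominant-term/distinguished-of-degree-$1$ behaviour of $\phi_0,\phi_1$, and injectivity on the ball from Theorem \ref{ThmInj}, which forces distinct fiber points on $\partial X_\gamma$ to be at distance at least $|\pi|^{\frac{1}{q-1}}\gamma^{\frac{-2}{q-1}}$ apart. You merely spell out the ultrametric bookkeeping (including $\delta\le\gamma$ and the boundary case $\gamma=|\pi|^{\frac{1}{q+1}}$) that the paper's two-line proof leaves implicit.
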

\begin{proof}
The dominating terms of $\phi_0$ on $X^\circ_{|\pi|^{\frac{1}{q+1}}}$ is the constant term and the dominating terms of $\phi_1$ on $X^\circ_{|\pi|^{\frac{1}{q+1}}}$ is the linear term, so $\frac{\phi_1}{\phi_0}$ is dominated by the linear term and hence is injective on $X^\circ_{|\pi|^{\frac{1}{q+1}}}$.\\

For other cases, observe that if $\gamma\geq |\pi|^{\frac{1}{q+1}}$, then $\{u\in X\,\,:\,\,|u-u_0|<|\pi|^{\frac{1}{q-1}}\gamma^{\frac{-2}{q-1}}\}\cap \Phi^{-1}\left(\Phi(u_1)\right)$ has either 1 or 0 points if $|u_0|=|u_1|=\gamma$. Hence the result follows.
\end{proof}

\begin{rem}
In a previous version of this paper, the proof of Theorem \ref{ThmInj} is done by considering the Newton's Polygon of the function $\psi(x)=\phi_0(u_0)\phi_1(u_0+x)-\phi_1(u_0)\phi_0(u_0+x)$.
\end{rem}

\end{para}

%%%%%%%%%%%%%%%%%%%%%%%%%%%%%%%%%%%%%%%%%%%%%%%%%%%%%%%%%%%%%%%%%%%%%%
\section{Images of the Domains of Injectivity}
%%%%%%%%%%%%%%%%%%%%%%%%%%%%%%%%%%%%%%%%%%%%%%%%%%%%%%%%%%%%%%%%%%%%%%

Suppose $|u_0|=\gamma$. And let $[z:w]$ be the homogeneous coordinates on $\bbP^1$.\\

If $ |\pi|^{\frac{1}{q^{s}+q^{s-1}}}<\gamma<|\pi|^{\frac{1}{q^{s+2}+q^{s+1}}}$ for some odd $s$, then $\gamma$ is not a critical radius of $\phi_0$, then the image of domain of injectivity around $u_0$ under $\Phi$ lies in $\{[1:w] \in \bbP^1\}$.  Indeed, for $|u-u_0|<|\pi|^{\frac{1}{q-1}}\gamma^{\frac{-2}{q-1}}$\\

 $$\begin{array}{rcl}
\left|\frac{\phi_1(u)}{\phi_0(u)}-\frac{\phi_1(u_0)}{\phi_0(u_0)}\right| &=& \left|\left(\frac{\phi_1}{\phi_0}\right)'(u_0)\right||u-u_0|\\
& &\\
 &= &  \frac{1}{\|\phi_0\|_{\partial X_\gamma}^2}|u-u_0|\\
 & &\\
 &=& |\pi|^{s+1}\gamma^{-2(1+q+\dots+q^{s})}|u-u_0|.\\
\end{array}$$
Then $\Phi$ is a bijection from $\{\,\,u\,\,:\,\,|u-u_0|<(|\pi|\gamma^{-2})^{\frac{1}{q-1}}\,\,\}$ to $$\left\{\,\,w\,\,:\,\,\left|w-\frac{\phi_1(u_0)}{\phi_0(u_0)}\right|<|\pi|^{s+1}\gamma^{-2(1+q+\dots+q^{s})}(|\pi|\gamma^{-2})^{\frac{1}{q-1}}=|\pi|^{s+1+\frac{1}{q-1}}\gamma^{-2\frac{q^{s+1}}{q-1}}\,\,\right\}\;.$$

In paricular, if $\gamma=|\pi|^{\frac{1}{q^{s+1}+q^{s}}}$ with $s$ is odd, then $|\pi|^{s+1+\frac{1}{q-1}}\gamma^{-2\frac{q^{s+1}}{q-1}}=|\pi|^{s+1-\frac{1}{q+1}}$.\\

Similarly, if $\gamma=|\pi|^{\frac{1}{q^{s+1}+q^{s}}}$ for some even $s$, then $\frac{\phi_0}{\phi_1}$ is a bijective map from \\ $\{\,\,u\,\,:\,\,|u-u_0|<(|\pi|\gamma^{-2})^{\frac{1}{q-1}}\,\,\}$ to \\
$$\left\{\,\,z\,\,:\,\,\left|z-\frac{\phi_0(u_0)}{\phi_1(u_0)}\right|<|\pi|^{s}\gamma^{-2(1+q+\dots+q^{s})}(|\pi|\gamma^{-2})^{\frac{1}{q-1}}=|\pi|^{s-\frac{1}{q+1}}\,\,\right\}.$$

Although $(|\pi|\gamma^{-2})^{\frac{1}{q-1}}\searrow  \left| \pi \right|^{\frac{1}{q-1}}$, hence it is bounded below as $s\rightarrow \infty$, the radius of the image disc goes to zero as $\gamma \rightarrow 1$.

%%%%%%%%%%%%%%%%%%%%%%%%%%%%%%%%%%%%%%%%%%%%%%%%%%%%%%%%%%%%%%%%%%%%%%
\section{Domains of Analyticity}
%%%%%%%%%%%%%%%%%%%%%%%%%%%%%%%%%%%%%%%%%%%%%%%%%%%%%%%%%%%%%%%%%%%%%%
\subsection{Local analyticity of the group action}
\begin{para}
Suppose $g\in G=End(\bar{F}\otimes \mathbb{F}_{q^2})\cong \fro^{*}_D$ with
$g=1+\pi^n\xi$ with $\xi\in \mu_{q^2-1}$ and $n\geq 1$. \\

In \cite{Ch}, Chai uses the action of $g$ on the Cartier-Dieudonne module $M_u$ of $F_u $ to compute the action on $g$ on $A_2[[u]]/(\pi)=\mathbb{F}_{q^2}[[u]]$. $M_u$ is a left module generated by an element $e_u$ over Cartier ring $Cart_A(\mathbb{F}_{q^2}[[u]]) $ and $e_u$ is annihilated by $F-V-\left\langle u\right\rangle\in Cart_A(\mathbb{F}_{q^2}[[u]]) $.

Suppose $s=g\cdot u\in \mathbb{F}_{q^2}[[u]]$. Then $g$ extends to a homomorphism from $Cart_A(\mathbb{F}_{q^2}[[u]]) e_s$ to $Cart_A(\mathbb{F}_{q^2}[[u]]) e_u$ which sends $e_s$ to $e_u+\pi^n\xi e_u+\sum_{m\geq 0} V^m\left\langle a_m(u)\right\rangle e_u$ for some $a_m(u)\in \mathbb{F}_{q^2}[[u]]$. So $e_u+\pi^n\xi e_u+\sum_{m\geq 0} V^m\left\langle a_m(u)\right\rangle e_u$ and $e_s$ are annihilated by $F-V-\left\langle s\right\rangle$ and $e_u$ is annihilated by $F-V-\left\langle u\right\rangle$. From this, \cite{Ch} Theorem 1 part (2) conclude the followings congruence relation are true:
\begin{numequation}\label{eq:yu1}  \left\{\begin{array}{rcl}
	a_m(u) &\in& A_2[[u]], \forall m\\
	a_m(u)&\equiv& 0, \forall m\geq n\\
	a_{n-1}(u)&\equiv& (\xi^{q^{n+1}}-\xi^{q^{n}})u^{q^n+q^{n-1}+\dots+q+1}\\
	a_{m}(u) & \equiv &-u^{q^{m+1}}a_{m+1}(u), 0\leq m\leq n-2\\
	u&\equiv& s(1+a_0(u))\\
\end{array}\right. \end{numequation}
The above hold modulo $(\pi)+u^{q-p+(q/p)(q^n+\dots+q+1)}$ when $q>p$ and hold modulo $(\pi)+(u^{p+2(p^n+\dots+p)})+(u^{p}a_0(u))$ when $q=p$.  It is clear that 
\[(\pi)+(u^{q-p+(q/p)(q^n+\dots+q+1)})\subset (\pi)+(u^{2q^n+2q^{n-1}+2q^{n-2}+\dots+2q^2+2q+3})\]
for $q>p$ but we have to be careful for the case when $q=p$. In any case, we can deduce from (\ref{eq:yu1}) that $a_0(u)\not\equiv 0$ modulo $(\pi)+(u^{q^n+2q^{n-1}+2q^{n-2}+\dots+2q^2+2q+2})$ and hence
\[(\pi)+(u^{p+2(p^n+\dots+p)})+(u^{p}a_0(u))\subset (\pi)+(u^{q^n+2q^{n-1}+2q^{n-2}+\dots+2q^2+2q+3}).\]
Therefore, modulo $(\pi)+(u^{q^n+2q^{n-1}+2q^{n-2}+\dots+2q^2+2q+3})$,\\
\[a_0(u)\equiv (-1)^{n-1}(\xi^{q^{n+1}}-\xi^{q^{n}})u^{q^n+2q^{n-1}+2q^{n-2}+\dots+2q^2+2q+1}\]
and 
\[\begin{array}{rcl}
s &\equiv & u(1+(-1)^{n}(\xi^{q^{n+1}}-\xi^{q^{n}})u^{q^n+2q^{n-1}+2q^{n-2}+\dots+2q^2+2q+1}) \\
&\equiv & u+(-1)^{n}(\xi^{q^{n+1}}-\xi^{q^{n}})u^{q^n+2q^{n-1}+2q^{n-2}+\dots+2q^2+2q+2} \\
&\equiv & u+(\xi-\xi^q)u^{q^n+2q^{n-1}+2q^{n-2}+\dots+2q^2+2q+2}.\\
\end{array}\]

Similarly, when $g=1+\pi_D\pi^n\xi$ with $\xi\in \mu_{q^2-1}$ and $n\geq 0$.
$g\cdot u=s$ where $s$ satisfies the relation\\
\begin{numequation} \label{eq:yu2}  \left\{\begin{array}{rcl}
	a_m(u) &\in& A_2[[u]], \forall m\\
	a_m(u)&\equiv& 0, \forall m\geq n+1\\
	a_{n}(u)&\equiv& \xi^{q^{n+2}}u^{q^n+q^{n-1}+\dots+q+1}\\
	a_{m}(u) & \equiv &-u^{q^{m+1}}a_{m+1}(u), 0\leq m\leq n-2\\
	u&\equiv& s(1+a_0(u))\\
\end{array}\right. \end{numequation}
hold modulo $(\pi)+(u^{q-p+(q/p)(q^n+\dots+q+1)})$ when $q>p$ and hold modulo $(\pi)+(u^{p+2(p^n+\dots+p)})+(u^{p}a_0(u))$ when $q=p$. We can deduce that $s\equiv u$ modulo $(\pi)+(u^{2q^n+2q^{n-1}+2q^{n-2}+\dots+2q^2+2q+2})$.\\

\begin{prop}
Suppose $n>0$. The group $1+\pi^n\fro_D$ acts local analytically on $\Omega_n=\{u\in X(\mathbb{C}_p):|u|<|\pi|^{\frac{1}{q^n+q^{n+1}}}\}$.
\end{prop}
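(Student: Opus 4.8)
The plan is to trivialise the action locally through the Gross--Hopkins period map $\Phi$, which is itself rigid analytic and $G$-equivariant, using the congruences (\ref{eq:yu1}) and (\ref{eq:yu2}) only to control displacements. For $u_0\in\Omega_n$ write $\gamma=|u_0|$ and set $\delta_\gamma:=|\pi|^{\frac{1}{q-1}}\gamma^{\frac{-2}{q-1}}$ if $\gamma\ge|\pi|^{\frac{1}{q+1}}$ and $\delta_\gamma:=|\pi|^{\frac{1}{q+1}}$ otherwise, so that $D(u_0):=\{u:|u-u_0|<\delta_\gamma\}$ is the domain of injectivity of $\Phi$ around $u_0$ in the sense of the Corollaries to Theorem \ref{ThmInj} (when $\gamma<|\pi|^{\frac{1}{q+1}}$ this disc is just $X^{\circ}_{|\pi|^{1/(q+1)}}$, on which $\Phi$ is injective). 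The crux is the estimate
\[ |g\cdot u_0-u_0|<\delta_\gamma\qquad\text{for all }g\in 1+\pi^n\fro_D,\ u_0\in\Omega_n. \]
Granting it, each $g$ maps $\Omega_n$ into itself --- hence onto itself, since $g^{-1}\in 1+\pi^n\fro_D$ --- and sends $u_0$ into $D(u_0)$.

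For the estimate it suffices to treat the topological generators $1+\pi^m\xi$ and $1+\pi_D\pi^m\xi$ with $m\ge n$ and $\xi\in\mu_{q^2-1}$: if the estimate holds for such a single generator $h$, then $|h\cdot u_0-u_0|<\delta_\gamma\le\gamma$ forces $|h\cdot u_0|=\gamma$ when $\gamma\ge|\pi|^{\frac{1}{q+1}}$ (and keeps $u_0$ inside $X^{\circ}_{|\pi|^{1/(q+1)}}$ otherwise), so a finite product of generators moves $u_0$ by at most the largest of the generator bounds, which is again $<\delta_\gamma$ by the computation below, and continuity of the action then yields the estimate for all of $1+\pi^n\fro_D$. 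For $g=1+\pi^m\xi$, (\ref{eq:yu1}) gives $g\cdot u_0-u_0\in(\xi-\xi^q)u_0^{E}+\bigl((\pi)+(u_0^{E+1})\bigr)\fro_{\mathbb{C}_p}$ with $E=q^m+2q^{m-1}+\dots+2q+2$, whence $|g\cdot u_0-u_0|\le\max(|\pi|,\gamma^E)$; for $g=1+\pi_D\pi^m\xi$, (\ref{eq:yu2}) gives the same bound with $E'=2q^m+\dots+2q+2$ in place of $E$. Finally $|\pi|<\delta_\gamma$ for every $\gamma<1$, and $\gamma^E<\delta_\gamma$ because $(q-1)E+2=q^m(q+1)\ge q^n(q+1)$ together with the defining inequality $\gamma^{q^n(q+1)}<|\pi|$ of $\Omega_n$; likewise $(q-1)E'+2=2q^{m+1}\ge q^n(q+1)$. (This is exactly what the radius $|\pi|^{1/(q^n+q^{n+1})}$ cutting out $\Omega_n$ is engineered to make work.)

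To conclude, fix $(g_0,u_0)\in(1+\pi^n\fro_D)\times\Omega_n$. Recall $\Phi$ is $G$-equivariant, $G$ acting on $(\mathbb{P}^1)^{\textrm{rig}}$ through a locally analytic homomorphism $g\mapsto\rho(g)$ into the Möbius group \cite{GH}, and that $\Phi$ restricts to a rigid-analytic isomorphism from $D(u_0)$ onto an open disc in $\mathbb{P}^1$ (injectivity from the Corollary to Theorem \ref{ThmInj}, the image from Section 3). For $(g,u)$ in a small neighbourhood of $(g_0,u_0)$ one has $u\in D(u_0)$, hence $D(u)=D(u_0)$ (equal discs when $\gamma\ge|\pi|^{\frac{1}{q+1}}$, both $X^{\circ}_{|\pi|^{1/(q+1)}}$ otherwise), and by the estimate applied at $u$ also $g\cdot u\in D(u)=D(u_0)$; since $\Phi(g\cdot u)=\rho(g)\,\Phi(u)$ and $\Phi$ is injective on $D(u_0)$, this forces
\[ g\cdot u=\bigl(\Phi|_{D(u_0)}\bigr)^{-1}\!\bigl(\rho(g)\,\Phi(u)\bigr), \]
a composite of locally analytic maps ($u\mapsto\Phi(u)$; $(g,w)\mapsto\rho(g)w$; and $\Phi^{-1}$ on the disc $\Phi(D(u_0))$), hence jointly locally analytic in $(g,u)$. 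I expect the real difficulty to lie in the displacement estimate: one must read off from Chai's congruences the sharp exponent and check that it dominates $\delta_\gamma$ uniformly over $\Omega_n$, which works only because $(q-1)E+2$ comes out equal to $q^m(q+1)$; everything else is bookkeeping with the period map once one knows (Section 3) that $\Phi$ trivialises over each domain of injectivity, the only subtlety there being the passage from generators to all of $1+\pi^n\fro_D$ and the constancy of the local branch of $\Phi^{-1}$, both controlled by the ultrametric identity $D(u)=D(u_0)$.
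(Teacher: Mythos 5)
Your proposal is correct and follows essentially the same route as the paper: reduce, via the $\fro_D^*$-equivariance of $\Phi$ and the analyticity of the Möbius action on $\Phi$ of the domain of injectivity, to the displacement estimate $|g\cdot u_0-u_0|\leq \max\{|\pi|,|u_0|^{q^n+2q^{n-1}+\dots+2q+2}\}<(|\pi|\,|u_0|^{-2})^{\frac{1}{q-1}}$ coming from the congruences (\ref{eq:yu1}) and (\ref{eq:yu2}). Your write-up merely supplies details the paper leaves implicit (reduction to the generators $1+\pi^m\xi$, $1+\pi_D\pi^m\xi$, the identity $(q-1)E+2=q^m(q+1)$, and the explicit trivialisation $g\cdot u=(\Phi|_{D(u_0)})^{-1}(\rho(g)\Phi(u))$), so no further comparison is needed.
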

\begin{proof}
We know that for $u_0\in \Omega_n$, the period map $\Phi$ is bijective on $\Delta_{u_o}=\{u: |u-u_0|<|\pi u^{-2}_o|^{\frac{1}{q-1}}\}$. And $1+\pi^n\fro_D$ acts analytically on $\Phi(\Delta_{u_0})$. Since $\Phi$ is $\fro^{*}_D$-equivariant, we only need to prove that for $g\in 1+\pi^n\fro_D$, $|g\cdot u_0-u_0|<|\pi u^{-2}_o|^{\frac{1}{q-1}}\} $.\\
But the argument in \cite{Ch} can show easily that $1+\pi^{n+1}\fro_D$ acts trivially on $u$ modulo $(\pi)+(u^{q^{n+1}+q^n+\dots+q+1})$. So the result state above enough to conclude that $|g\cdot u_0-u_0|\leq \max\{|\pi|,|u_0|^{q^n+2q^{n-1}+2q^{n-2}+\dots+2q^2+2q+2}\}< |\pi u^{-2}_o|^{\frac{1}{q-1}}$ for $|u_0|<|\pi|^{\frac{1}{q^n+q^{n+1}}}$ and $g\in 1+\pi^n\fro_D$.
\end{proof}
A similar statement for $1+\pi_D\pi^n\fro_D$.\\
\begin{prop}
Suppose $n\geq 0$. The group $1+\pi_D\pi^n\fro_D$ acts local analytically on $\Omega_n:=\{u\in X(\mathbb{C}_p):|u|<|\pi|^{\frac{1}{2q^n}}\}$. 
\end{prop}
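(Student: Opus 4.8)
The plan is to mirror exactly the proof of the previous proposition, now using the congruence relations (\ref{eq:yu2}) in place of (\ref{eq:yu1}). For $u_0\in\Omega_n=\{u\in X(\mathbb{C}_p):|u|<|\pi|^{\frac{1}{2q^n}}\}$, the corollaries to Theorem \ref{ThmInj} guarantee that $\Phi$ is bijective on the domain of injectivity $\Delta_{u_0}=\{u:|u-u_0|<(|\pi|\,|u_0|^{-2})^{\frac{1}{q-1}}\}$, and since $\Phi$ is $\fro_D^\ast$-equivariant and $1+\pi_D\pi^n\fro_D$ acts analytically on the image side $\Phi(\Delta_{u_0})\subset(\mathbb{P}^1)^{\mathrm{rig}}$ (via fractional linear transformations, as discussed for the $G$-action in \cite{GH}), it suffices to check the single containment $g\cdot u_0\in\Delta_{u_0}$ for every $g\in 1+\pi_D\pi^n\fro_D$. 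This reduces the whole statement to the estimate $|g\cdot u_0-u_0|<(|\pi|\,|u_0|^{-2})^{\frac{1}{q-1}}$.

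First I would record the key output of the Cartier-module computation preceding the proposition: from (\ref{eq:yu2}) one deduces $s=g\cdot u\equiv u$ modulo $(\pi)+(u^{2q^n+2q^{n-1}+\dots+2q+2})$ for $g=1+\pi_D\pi^n\xi$, and an analogous ``one level deeper'' statement — that $1+\pi_D\pi^{n+1}\fro_D$ (equivalently $1+\pi^{n+1}\fro_D$ together with the $\pi_D$-twist) acts trivially on $u$ modulo a higher power of $u$ — shows that a general $g\in 1+\pi_D\pi^n\fro_D$ (not just the one-parameter elements $1+\pi_D\pi^n\xi$) still satisfies $g\cdot u_0\equiv u_0$ modulo $(\pi)+(u_0^{\,2q^n+\dots+2q+2})$. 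Hence
\[
|g\cdot u_0-u_0|\ \le\ \max\bigl\{\,|\pi|\,,\ |u_0|^{\,2q^n+2q^{n-1}+\dots+2q+2}\,\bigr\}.
\]

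Next I would verify the strict inequality against the radius of the domain of injectivity by treating the two terms of the maximum separately, using the hypothesis $|u_0|<|\pi|^{\frac{1}{2q^n}}$, i.e. $|u_0|^{2q^n}<|\pi|$. For the first term: since $|u_0|<1$ we have $(|\pi|\,|u_0|^{-2})^{\frac{1}{q-1}}>|\pi|^{\frac{1}{q-1}}\ge|\pi|$, so $|\pi|<(|\pi|\,|u_0|^{-2})^{\frac{1}{q-1}}$. For the second term one must check
\[
|u_0|^{\,2q^n+2q^{n-1}+\dots+2q+2}\ <\ \bigl(|\pi|\,|u_0|^{-2}\bigr)^{\frac{1}{q-1}},
\]
which, writing both sides as powers of $|u_0|$ via $|u_0|^{2q^n}<|\pi|$, amounts to an elementary inequality among the exponents $2(q^n+\dots+q+1)$, $\frac{2q^n}{q-1}$, and $\frac{-2}{q-1}$; since $2(q^n+\dots+q+1)(q-1)=2(q^{n+1}-1)<2q^{n+1}-2+2q^n\cdot? $ — the point is that $(q^n+\dots+q+1)(q-1)=q^{n+1}-1$ is strictly smaller than $q^n+ (q^n-1)$, so the exponent comparison holds with room to spare. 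I expect the main (only) real obstacle to be this bookkeeping: making sure the exponent inequality is strict for all $n\ge 0$ and all $q$, including the borderline small cases, and confirming that the congruence modulus $(\pi)+(u^{2q^n+\dots+2q+2})$ extracted from (\ref{eq:yu2}) is genuinely valid for arbitrary $g\in 1+\pi_D\pi^n\fro_D$ and not merely for the elements $1+\pi_D\pi^n\xi$ with $\xi\in\mu_{q^2-1}$ — this last point is handled exactly as in the previous proposition, by iterating the triviality of the action of the next subgroup in the filtration.
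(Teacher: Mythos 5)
Your proposal is correct and follows exactly the route the paper intends: the paper supplies no separate proof for this proposition, only the remark that the argument is similar to the preceding one, and your plan (reduce via $\fro_D^*$-equivariance of $\Phi$ to the estimate $|g\cdot u_0-u_0|<(|\pi|\,|u_0|^{-2})^{\frac{1}{q-1}}$, then feed in the congruence $s\equiv u$ modulo $(\pi)+(u^{2q^n+\dots+2q+2})$ from (\ref{eq:yu2}) together with the triviality of the next subgroup in the filtration) is precisely that argument. The exponent check you flag as the main obstacle does go through, though your inline comparison ($q^{n+1}-1$ versus $q^n+(q^n-1)$) is garbled: writing $|u_0|=|\pi|^{t}$ with $t>\frac{1}{2q^n}$, the inequality $|u_0|^{2(q^n+\dots+q+1)}<(|\pi|\,|u_0|^{-2})^{\frac{1}{q-1}}$ is equivalent to $2t(q^{n+1}-1)>1-2t$, i.e.\ $t>\frac{1}{2q^{n+1}}$, which holds with room to spare on $\Omega_n$.
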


\end{para}

\nocite{*}

\bibliographystyle{plain}

\bibliography{ref}

\end{document}